\newtheorem{theorem}{Theorem}[section]
\newtheorem{lemma}[theorem]{Lemma}
\newtheorem{corollary}[theorem]{Corollary}
\newtheorem{proposition}[theorem]{Proposition}
\theoremstyle{definition}
\newtheorem{definition}[theorem]{Definition}
\newtheorem{example}[theorem]{Example}
\theoremstyle{remark}
\numberwithin{equation}{section}
\DeclareSymbolFont{EulerExtension}{U}{euex}{m}{n}
\DeclareMathSymbol{\euintop}{\mathop} {EulerExtension}{"52}
\DeclareMathSymbol{\euointop}{\mathop} {EulerExtension}{"48}
\def\dr{\ar@{->}[r]}
\def\Add{\mathsf{Add}}
\def\Prod{\mathsf{Prod}\hspace{.01in}}
\def\Ext{\mathsf{Ext}}
\def\Hom{\mbox{Hom}}
\def\cg{\mathsf{Cogen}\hspace{.01in}}
\def\gen{\mathsf{Gen}\hspace{.01in}}
\def\End{\mathsf{End}}
\def\Ker{\mathsf{Ker}\hspace{.01in}}
\def\Im{\mathsf{Im}\hspace{.01in}}
\newcommand{\ann}{\mathsf{ann}}
\newcommand{\gl}{\mathsf{gl.dim}\hspace{.01in}}
\def\mod{\mathsf{mod}\hspace{.01in}}
\def\Mod{\mathsf{Mod}\hspace{.01in}}
\def\Tr{\mathsf{Tr}}
\def\rej{\mathsf{Rej}}
\begin{document}

\title{Cosilting modules arising from cotilting objects}

\author{Yonggang Hu}
\address{College of Applied Sciences, Beijing University of Technology, 100124 Beijing, P. R. China. }
\email{huyonggang@emails.bjut.edu.cn}
\thanks{Yonggang Hu  was supported by National Natural Science Foundation of China (Grant  Nos. 11671126, 12071120). Panyue Zhou was supported
	by the National Natural Science Foundation of China (Grant No. 11901190) and by the Scientific Research Fund of Hunan Provincial Education Department (Grant No. 19B239).}

\author{Panyue Zhou}
\address{College of Mathematics, Hunan Institute of Science and
Technology, 414006 Yueyang, Hunan,
	P. R. China.}
\email{panyuezhou@163.com}
\thanks{}

\subjclass[2020]{Primary 16D90, 18E10; Secondary 16D10, 18G15.}



\keywords{Cosilting modules, Cotilting objects, Subgenerated categories.}

\begin{abstract}
Let $R$ be a ring.  In this paper, we study the characterization of cosilting modules and establish a relation between cosilting modules and cotilting objects in a Grothendieck category. We proved that each cosilting right $R$-module $T$ can be described as a cotilting object in $\sigma[R/I]$, where $I$ is a right ideal of $R$ determined by  $T$ and $\sigma[R/I]$ is the full
subcategory of right $R$-modules, consisting of submodules of $R/I$-generated modules. Conversely, under some suitable conditions, if $T$ is a cotilting object in $\sigma[R/I]$, then $T$ is cosilting.
\end{abstract}

\maketitle

\section{Introduction}
In order to study $t$-structures in the bounded derived category of representations of Dynkin quivers, Keller and Vossieck in \cite{Keller} introduced the notion of silting objects in triangulated categories. In \cite{Koenig}, Koenig and Yang established bijective correspondences between other important concepts such as simple-minded collections,  bounded $t$-structures with length heart and  bounded co-$t$-structures. So, silting theory plays an important role in the study of homotopy or derived categories.\par
 Later, Angeleri-H\"{u}gel,  Marks, and  Vit\'{o}ria
\cite{Angeleri}  introduced the notion of  silting module, which generalizes the notion of tilting module over an arbitrary ring as well as the notion of support $\tau$-tilting module over a finite dimensional algebra (introduced by Adachi, etc. see \cite{Adachi}). Furthermore, they proved that there are bijections between silting
modules, 2-term silting complexes, certain $t$-structures
and co-$t$-structures in the derived module category. And all silting
modules are quasi-tilting modules, see \cite{Angeleri}.\par
Recently, in \cite{Breaz,Zhang}, Breaz, Pop, Zhang and Wei introduced the dual notion of silting modules, called cosilting modules. Zhang and Wei  \cite{Zhang} proved that the three concepts AIR-cotilting modules, cosilting modules and quasi-cotilting modules (\cite{Zhang02}) coincide with each other. Moreover, Breaz and Pop \cite{Breaz} gave several characterizations of  cosilting modules. In particular,  a finitely generated module $M$ over an artin algebra is silting if and only if $M^{d}$ is cosilting, where $(-)^d$ is the standard duality. However, in the general case, the above statement is not valid and  although $N$ is cosilting, $N^{d}$ is not necessarily silting, see the counterexamples \cite[Example 3.10 and 3.11]{Breaz}.\par
Let $R$ be a ring. From \cite{Wisbauer}, Wisbauer introduced the notion of $\sigma[M]$ the full subcategory of $\Mod R$ formed by the
modules subgenerated by $M$. It was shown that a good
homology theory is possible in $\sigma[M]$. For example, it is a Grothendieck category with enough injective modules and arbitrary products exists. \par
In this short paper, we study the cotilting theory in $\sigma[M]$. Inspired by the framework of Breaz and Pop in \cite{Breaz}, we  describe the connection between  cosilting modules and cotilting objects in the subgenerated category $\sigma[M]$. Now, we present our main result as follows.

\begin{theorem}
Let $R$ be a ring and $\zeta:Q_{0}\rightarrow Q_{1}$ is an $R$-homomorphism between injective modules with $T=\Ker \zeta$. Then $T$ is a cosliting module in $\Mod R$ if and only if there exists a right ideal $I$ of  $R$ such that  $T$ and $I$ satisfy the following conditions:
  \begin{enumerate}
    \item $T$ is a  cotilting object in $\sigma[R/I]$ with the injective coresolving
\begin{equation*}
0\longrightarrow  T\longrightarrow   \Tr_{R/I}(Q_{0})\xrightarrow{\Tr_{R/I}(\zeta)}\Tr_{R/I}(Q_{1})\longrightarrow  0.
\end{equation*}
    \item $\Ext^{1}_{\sigma[R/I]}(R/I,T)=0$.
    \item  {\rm$I\in\Ker\Hom_{R}(-,\mathcal{B}_{\zeta})$}.
  \end{enumerate}
\end{theorem}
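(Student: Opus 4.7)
The plan is to prove the two directions separately, exploiting the fact that the cosilting class $\mathcal{B}_\zeta=\cg(T)$ is a torsionfree class in $\Mod R$, which supplies a canonical choice of the ideal $I$.

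For the forward direction, suppose $T$ is cosilting in $\Mod R$. Since $\mathcal{B}_\zeta$ is closed under submodules, products and extensions, it is the torsionfree part of a torsion pair $(\mathcal{T},\mathcal{B}_\zeta)$ with torsion class $\mathcal{T}=\{X:\Hom_R(X,B)=0\ \text{for every}\ B\in\mathcal{B}_\zeta\}$. I would take $I$ to be the torsion part of $R_R$ with respect to this pair; then condition (3) is immediate from the defining property of the torsion submodule. Because $R/I\in\mathcal{B}_\zeta=\cg(T)$, the module $T$ itself lies in $\sigma[R/I]$, and the trace functor $\Tr_{R/I}$, being right adjoint to the inclusion $\sigma[R/I]\hookrightarrow\Mod R$, carries injective $R$-modules to injective objects of $\sigma[R/I]$. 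Restricting the coresolution $0\to T\to Q_0\to Q_1$ to $\sigma[R/I]$ via $\Tr_{R/I}$ produces the injective coresolution of (1); the exactness on the right follows from the cosilting hypothesis together with $T\subseteq\Tr_{R/I}(Q_0)$. The local cotilting equality $\cg(T)\cap\sigma[R/I]=\{X\in\sigma[R/I]:\Ext^1_{\sigma[R/I]}(X,T)=0\}$ can then be derived from the ambient cosilting equality via the adjunction, and condition (2) falls out of $R/I\in\cg(T)$ combined with this characterization.

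For the backward direction, assume (1)-(3). The ambient coresolution is already given, so it suffices to establish $\cg(T)=\mathcal{B}_\zeta$ in $\Mod R$. The inclusion $\cg(T)\subseteq\mathcal{B}_\zeta$ is a standard $\Hom$-sequence computation from the given coresolution. For the reverse inclusion, I would, given $X\in\mathcal{B}_\zeta$, consider the exact sequence $0\to\Tr_{R/I}(X)\to X\to X/\Tr_{R/I}(X)\to 0$. The submodule $\Tr_{R/I}(X)$ lies in $\sigma[R/I]$, and by (1) embeds into a product of copies of $T$. The quotient $X/\Tr_{R/I}(X)$ contains no nonzero $R/I$-generated submodule; condition (3) forces every map $I\to B$ with $B\in\mathcal{B}_\zeta$ to vanish, while condition (2) kills the relevant Ext obstruction, and combining these produces the required embedding of the quotient into further copies of $T$.

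The principal obstacle is controlling the discrepancy between $\Ext^1_{\sigma[R/I]}(-,T)$ and $\Ext^1_R(-,T)$, which differ because the injective envelopes in the two categories are not the same. Condition (2) is engineered precisely to kill this discrepancy on the distinguished object $R/I$; since $R/I$ generates $\sigma[R/I]$, the vanishing should propagate to all relevant test objects via standard dimension-shifting arguments together with the adjunction between $\Tr_{R/I}$ and the inclusion. Making this transfer precise, and using it to glue the local cotilting behaviour back to the global cosilting behaviour, will be the most technical step of the argument.
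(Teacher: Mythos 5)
Your choice of $I$ as the torsion submodule of $R_R$ for the torsion pair $(\Ker\Hom_R(-,T),\cg T)$ matches the paper exactly, and you correctly observe that condition (3) is then automatic; that part of the setup is fine. But both directions gloss over precisely the steps where the real work lies, and one of your key moves for the backward direction is actually misdirected.

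For the backward direction, the claim that $\cg T\subseteq\mathcal{B}_\zeta$ is ``a standard $\Hom$-sequence computation from the given coresolution'' is not justified: the coresolution in condition~(1) lives in $\sigma[R/I]$, while $\mathcal{B}_\zeta$ is defined from the ambient copresentation $\zeta:Q_0\to Q_1$ in $\Mod R$. Bridging these requires showing that arbitrary products $T^{\mu}$ are $R/I$-generated and that $\Ext^1_R(T^{\mu},\Tr_{R/I}(Q_0))=0$, which is exactly the four-step content of the paper's Proposition~\ref{prop3-1} and is nontrivial. More seriously, your plan for $\mathcal{B}_\zeta\subseteq\cg T$---splitting off $\Tr_{R/I}(X)$ and trying to ``embed the quotient $X/\Tr_{R/I}(X)$ into further copies of $T$''---misses that condition~(3) already forces $\Hom_R(I,X)=0$ for every $X\in\mathcal{B}_\zeta$, hence $\Hom_R(R/I,X)\cong\Hom_R(R,X)$ and $X=\Tr_{R/I}(X)$; the quotient you are trying to control is zero. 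The actual difficulty, which you do not address, is turning membership of $X$ in $\sigma[R/I]$ into membership in $\cg T$ in $\Mod R$, and the paper resolves this by constructing a two-term $\Prod T$-coresolution $0\to T_1\to T_0\to \Tr_{R/I}(W)\to 0$ of the relative injective cogenerator (Lemma~\ref{lemma3-3}, a push-out argument using Colby--Fuller) and then invoking Breaz's criterion (Proposition~\ref{prop2-2}). Your sketch of the backward direction contains neither of these ingredients.

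For the forward direction, asserting that the cotilting equality in $\sigma[R/I]$ ``can be derived from the ambient cosilting equality via the adjunction'' and that ``the vanishing should propagate via standard dimension-shifting'' is too optimistic. The paper's proof needs three substantive steps you do not mention: invoking \cite[Theorem~3.7]{Breaz} to produce $0\to T_1\to T_0\to\Tr_{R/I}(E)\to 0$ with $T_i\in\Prod T$; proving the double-orthogonality vanishing $\Ker\Hom_{\sigma[R/I]}(-,T)\cap\Ker\Ext^1_{\sigma[R/I]}(-,T)=0$ (which uses Lemmas~\ref{lemma3-6} and~\ref{lemma3-7} to compare $\Hom$ and $\Ext$ with products in $\sigma[R/I]$ versus $\Mod R$); and a reject-chase on $\rej_T(X)$ to promote $\Ext^1_{\sigma[R/I]}(X,T)=0$ to $X\in\cg_{\sigma[R/I]}T$. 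None of this is ``standard dimension-shifting.'' Finally, a small imprecision: $\Tr_{R/I}$ is a trace functor, not literally the right adjoint of the inclusion $\sigma[R/I]\hookrightarrow\Mod R$; what matters (and what the paper records in the preliminaries) is that $\Tr_{R/I}(W)$ is injective in $\sigma[R/I]$ whenever $W$ is injective in $\Mod R$.
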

In particular, if $T$ is a cosilting module over a commutative ring $R$  or $T$ is a finitely generated cosilting module over an artin algebra $R$, then $T$ is a cotilting module over a factor algebra of $R$.

The paper is organized as follows. In Section 2, we recall some basic notions and results. In Section 3, we proved our main result and give an example to explain it.\par
For some unexplain notions, the readers refer to the references \cite{Breaz,Anderson,Wisbauer}.

\section{Preliminaries}
Let $R$ be a ring with unit element. Denote by $\Mod R$ the category of right $R$-modules. If $\zeta:Q_{0}\rightarrow Q_{1}$ is an $R$-homomorphism, then class $\mathcal{B}_{\zeta}$ is defined as
$$\mathcal{B}_{\zeta}=\{X\in \Mod R~|~\Hom_{R}(X,\zeta)~\textrm{is an epimorphism}\}.$$

\begin{definition}\cite[Definition 3.1]{Breaz}
We say that an $R$-module $T$ is:
\begin{enumerate}
  \item partical cosilting (with respect to $\zeta$), if there exists an injective copresentation of $T$
      $$0\rightarrow T\xrightarrow{f}Q_{0}\xrightarrow{\zeta} Q_{1}$$
      such that:
      \begin{enumerate}
        \item $T\in\mathcal{B}_{\zeta}$, and
        \item the class $\mathcal{B}_{\zeta}$ is closed under direct products;
      \end{enumerate}
  \item cosilting (with respect to $\zeta$), if there exists an injective copresentation
      $$0\rightarrow T\xrightarrow{f}Q_{0}\xrightarrow{\zeta} Q_{1}$$
      of $T$ such that $\cg T=\mathcal{B}_{\zeta}$.
\end{enumerate}
\end{definition}

From \cite[Example 3.3]{Breaz}, we can see that every (partial) cotilting module is (partial)
cosilting and, for every ring $R$, the trivial module $0$ is cosilting. Moreover, even for some
hereditary rings, there exist non-zero cosilting modules which are not cotilting.

We collect some facts which will be used in the sequel.
\begin{proposition}{\rm\cite[Corollary 3.5 and Lemma 3.4]{Breaz}}\label{lemma2-1} Let $\zeta:Q_{0}\rightarrow Q_{1}$ is an $R$-homomorphism between injective modules with $T=\Ker \zeta$. If $T$ is a partial cosilting module, then the pair {\rm($\Ker\Hom_{R}(-,T)$, $\cg T$)} is a torison pair and {\rm$\cg T\subseteq \mathcal{B}_{\zeta}\subseteq {^{\perp}}T$}.
\end{proposition}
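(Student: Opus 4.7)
The plan is to prove the biconditional by first giving a natural candidate for $I$ in the forward direction and then reversing the argument using the hypotheses to recover the cosilting equality $\cg T=\mathcal{B}_{\zeta}$.

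For the forward direction, assume $T$ is cosilting, so by Proposition~\ref{lemma2-1} we have a torsion pair $(\Ker\Hom_{R}(-,T),\,\cg T)$ with $\cg T=\mathcal{B}_{\zeta}$. The natural choice is to let $I$ be the torsion submodule $t(R)$ of $R_R$ with respect to this torsion pair, equivalently $I=\rej_{T}(R)=\bigcap_{f\in\Hom_R(R,T)}\Ker f$ (the two coincide since $R/\rej_T(R)$ embeds into $T^{\Hom_R(R,T)}\in\cg T$, while $t(R)$ is killed by every map to $T$). With this definition, condition~(3) is immediate: $I\in\Ker\Hom_R(-,\cg T)=\Ker\Hom_R(-,\mathcal{B}_{\zeta})$. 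Since $R/I\in\cg T\subseteq\mathcal{B}_{\zeta}$, condition~(2) should follow from the fact that $\cg T$ is a torsion-free class associated to the cosilting module and standard Ext-computations translate between $\Mod R$ and $\sigma[R/I]$ (using that $\Ext^{1}_{\sigma[R/I]}(R/I,T)$ embeds into $\Ext^{1}_{R}(R/I,T)$, which vanishes because $T$ is cosilting and $R/I\in\cg T$).

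The heart of (1) is exactness of
\begin{equation*}
0\longrightarrow T\longrightarrow \Tr_{R/I}(Q_{0})\xrightarrow{\Tr_{R/I}(\zeta)}\Tr_{R/I}(Q_{1})\longrightarrow 0.
\end{equation*}
First, $T\in\sigma[R/I]$ follows because $T\in\cg T$ and $R/I$ generates $\cg T$ (any cyclic submodule of $T^{\Lambda}$ is a quotient of $R/\rej_T(R)=R/I$). Hence $T\subseteq\Tr_{R/I}(Q_{0})$, and since traces are functorial, $\zeta$ restricts to the stated map with kernel $T$. Surjectivity onto $\Tr_{R/I}(Q_{1})$ is where I use the cosilting property most directly: any $g\colon R/I\to Q_{1}$ lifts through $\zeta$ because $R/I\in\cg T=\mathcal{B}_{\zeta}$ and $\Hom_{R}(R/I,\zeta)$ is epi; the lift $g'\colon R/I\to Q_{0}$ has $R/I$-generated image, so it lands in $\Tr_{R/I}(Q_{0})$. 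Summing over all such $g$ shows the cokernel map surjects onto $\Tr_{R/I}(Q_{1})$. With the coresolution in hand, verifying that $T$ is cotilting in $\sigma[R/I]$ reduces to checking injective dimension at most $1$ (from the resolution), product-Ext vanishing $\Ext^{1}_{\sigma[R/I]}(T^{\Lambda},T)=0$ (inherited from the cosilting vanishing in $\Mod R$, since products in $\sigma[R/I]$ are obtained by applying $\Tr_{R/I}$ to products in $\Mod R$ and $T\in\cg T$), and the cogenerator/cotilting-class identification using that $R/I$ generates $\cg T$.

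For the converse, assume (1)--(3). The task is to show $\cg T=\mathcal{B}_{\zeta}$. The inclusion $\cg T\subseteq\mathcal{B}_{\zeta}$ is the easier one: one checks directly from the coresolution in (1) and condition~(2) that $T$ is partial cosilting (so Proposition~\ref{lemma2-1} applies) and invokes that proposition. For the harder inclusion $\mathcal{B}_{\zeta}\subseteq\cg T$, given $M\in\mathcal{B}_{\zeta}$, I would first split off the $I$-part: condition~(3) gives $\Hom_{R}(I,\mathcal{B}_{\zeta})=0$, so every $R$-linear map from a cyclic submodule of $M$ annihilates $I$ and therefore $M$ lies in $\sigma[R/I]$ after passing to the relevant subquotients, more precisely every cyclic $mR\subseteq M$ is a quotient of $R/I$ so $M\in\sigma[R/I]$. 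Inside $\sigma[R/I]$, the cotilting object $T$ has cotilting class $\cg T\cap\sigma[R/I]$ and this class coincides with the "Ext-perpendicular" of $T$ in $\sigma[R/I]$; using the coresolution from (1), membership in $\mathcal{B}_{\zeta}$ (i.e., lifting of maps through $\zeta$) translates to the vanishing of the relevant $\Ext^{1}_{\sigma[R/I]}(M,T)$, which puts $M$ in the cotilting class and hence in $\cg T$.

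The main obstacle I expect is precisely this last transfer in the reverse direction: identifying the $\mathcal{B}_{\zeta}$-condition (formulated via $\zeta$ in $\Mod R$) with the $\Ext$-vanishing / cogeneration condition inside $\sigma[R/I]$, because the injective objects in $\sigma[R/I]$ are the traces $\Tr_{R/I}(Q_{i})$ rather than the $Q_{i}$ themselves, and one must carefully use condition~(3) to ensure that the gap between $Q_{i}$ and $\Tr_{R/I}(Q_{i})$ does not obstruct the lifting of maps from $M$.
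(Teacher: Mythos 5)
Your proposal does not address the statement you were asked to prove. Proposition~\ref{lemma2-1} asserts that if $T$ is a \emph{partial cosilting} module with injective copresentation $0\to T\to Q_0\xrightarrow{\zeta}Q_1$, then $(\Ker\Hom_R(-,T),\,\cg T)$ is a torsion pair in $\Mod R$ and $\cg T\subseteq\mathcal{B}_{\zeta}\subseteq{}^{\perp}T$. This is a result the paper simply cites from Breaz and Pop (their Corollary~3.5 and Lemma~3.4); it makes no mention of right ideals, trace functors, the category $\sigma[R/I]$, or cotilting objects. A proof of it would go roughly: the inclusion $\mathcal{B}_{\zeta}\subseteq{}^{\perp}T$ follows from $Q_0$ being injective and $T=\Ker\zeta$ (this is \cite[Lemma 2.3(4)]{Breaz}, recalled in Section~3 of the paper); the inclusion $\cg T\subseteq\mathcal{B}_{\zeta}$ uses that $T\in\mathcal{B}_{\zeta}$, that $\mathcal{B}_{\zeta}$ is closed under products by the definition of partial cosilting, and that $\mathcal{B}_{\zeta}$ is always closed under submodules; and the torsion pair claim then follows because $\cg T$ is closed under submodules, products and extensions (extension-closure needing $\cg T\subseteq{}^{\perp}T$), hence is a torsion-free class.

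What you wrote instead is an outline of the paper's main result, Theorem~\ref{Th1}: a biconditional characterization of cosilting modules via a right ideal $I$ and a cotilting object in $\sigma[R/I]$ satisfying conditions (1)--(3). None of that machinery appears in the proposition at hand, and the proposition is a prerequisite for the theorem, not the other way around. As it stands, the proposal proves a different (and logically downstream) statement, so it cannot be counted as a proof of Proposition~\ref{lemma2-1}.
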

\begin{proposition}\label{prop2-2} Let $T$ be an $R$-module and let $E$ be an injective cogenerator in $\Mod R$. Assume that
 $0\rightarrow T\rightarrow Q_{0}\xrightarrow{\zeta} Q_{1}$ is an injective copresentation for $T$. If there exists an exact sequence
     $$0\rightarrow T_{1}\rightarrow T_{0}\xrightarrow{\xi} E$$
such that $T_{i}\in \Prod T$,  and for any $T'\in\mathcal{B}_{\zeta}$ the homomorphism {\rm$\Hom_{R}(T',\xi)$} is epic, then $\mathcal{B}_{\zeta}\subseteq\cg T$.
\end{proposition}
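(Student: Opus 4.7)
The goal is to show that every $T'\in\mathcal{B}_{\zeta}$ admits a monomorphism into some object of $\Prod T$, which is the definition of $T'\in\cg T$. My plan is a short diagram chase that exploits the injective cogenerator $E$ together with the epimorphism hypothesis on $\Hom_{R}(T',\xi)$.

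First, since $E$ is an injective cogenerator of $\Mod R$, for any $T'\in\mathcal{B}_{\zeta}$ there is a set $\Lambda$ and a monomorphism $\iota:T'\hookrightarrow E^{\Lambda}$; write $g_{\lambda}:T'\to E$ for its components, so that $\iota=(g_{\lambda})_{\lambda\in\Lambda}$. Next, invoke the hypothesis: because $\Hom_{R}(T',\xi)$ is surjective, every $g_{\lambda}$ lifts through $\xi$, i.e.\ there exists $h_{\lambda}:T'\to T_{0}$ with $\xi\circ h_{\lambda}=g_{\lambda}$. Assembling these, I obtain a morphism $h=(h_{\lambda})_{\lambda\in\Lambda}:T'\to T_{0}^{\Lambda}$ fitting into the commutative diagram
\begin{equation*}
\xymatrix{T'\ar[r]^-{h}\ar[dr]_-{\iota} & T_{0}^{\Lambda}\ar[d]^-{\xi^{\Lambda}}\\ & E^{\Lambda}.}
\end{equation*}
Since $\iota=\xi^{\Lambda}\circ h$ is a monomorphism, so is $h$.

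It remains to observe that $T_{0}^{\Lambda}\in\Prod T$. By hypothesis $T_{0}\in\Prod T$, so $T_{0}$ is a direct summand of some power $T^{J}$; then $T_{0}^{\Lambda}$ is a direct summand of $(T^{J})^{\Lambda}\cong T^{J\times\Lambda}$, hence lies in $\Prod T$. Therefore $h$ realises $T'$ as a submodule of an object of $\Prod T$, giving $T'\in\cg T$ and proving $\mathcal{B}_{\zeta}\subseteq\cg T$.

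The argument is essentially a routine diagram chase, so there is no serious obstacle; the only point that requires a moment's care is the closure of $\Prod T$ under arbitrary powers, which I have spelled out above. Note also that the left end $T_{1}$ of the given exact sequence plays no active role here beyond ensuring that $\xi$ is presented as a well-defined morphism; exactness at $T_{0}$ is not used in the embedding argument.
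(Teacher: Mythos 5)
Your proof is correct and is essentially the standard argument (the paper delegates to Breaz's Theorem 3.7, and the argument there is of this form): embed $T'$ into $E^{\Lambda}$, lift component-wise through $\xi$ using the surjectivity of $\Hom_{R}(T',\xi)$, and conclude that the resulting map $T'\to T_{0}^{\Lambda}$ is mono with target in $\Prod T$. Your observation that $T_{1}$ and exactness at $T_{0}$ play no role is also accurate.
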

\begin{proof} The proof can be induced from the implication (2)$\Rightarrow$(1) in \cite[Theorem 3.7]{Breaz}.
\end{proof}
Let $\mathcal{A}$ be an abelian category. Let $\mathcal{U}$ be a non-empty set (class) of objects in a
subcategory $\mathcal{C}\subseteq \mathcal{A}$. Recall that an object $A$ in $\mathcal{C}$ is said to be generated by $\mathcal{U}$ or $\mathcal{U}$-generated if, for every pair of distinct morphisms $f, g : A \rightarrow B$ in $\mathcal{A}$, there is a morphism $h : U \rightarrow A$ with $U\in \mathcal{U}$ and $f h \neq g h$. In this case, $\mathcal{U}$  is called a set (class) of
generators for  $\mathcal{C}$. Dually, one can define the notions  of cogenerated and cogenerators. Assume that the coproduct and product of $U$ in $\mathcal{A}$ exist. $\gen_{\mathcal{A}} U$ denotes the objects class of  $\mathcal{A}$ which each object is an image of a coproduct of $U$ in $\mathcal{A}$. $\cg_{\mathcal{A}} U$ denotes the  objects class of  $\mathcal{A}$ which each object can be embedded into a  product of $U$ in $\mathcal{A}$.

For an $R$-module $L$ of $\Mod R$, the submodule
$$\Tr_{\mathcal{U}}(L)=\sum\{\Im f~|~f\in\Hom_{R}(U, L), ~U\in \mathcal{U}\}\subseteq L$$
is called the trace of $\mathcal{U}$ in $L$, and
$$\rej_{\mathcal{U}}(L)=\bigcap\{\Ker f~|~f\in\Hom_{R}(L, U), ~U\in \mathcal{U}\}\subseteq L$$
is called the reject of $\mathcal{U}$ in $L$.
It is easy to see that $\Tr_{\mathcal{U}}(L)$ is  the maximal submodule of $L$ generated by $\mathcal{U}$ and $\rej_{\mathcal{U}}(L)$ is the minimal submodule of $L$ such that $L/\rej_{\mathcal{U}}(L)$ is cogenerated by $\mathcal{U}$. If an $R$-module $L$ is generated by $\mathcal{U}$, then $\Tr_{\mathcal{U}}(L)=L$.\par

If $\mathcal{U}=\{U\}$, then $$\Tr_{U}(L)=\{\sum_{i=1}^{k}f_{i}(u_{i})~|~u_{i}\in U, f_{i}\in\Hom_{R}(U, L), k\in\mathbb{N}\}.$$ It is well-known that $\Tr_{U}(-)$ defines a functor
from $\Mod R$ into $\Mod R$. For any $R$-homomorphism $\zeta: L\rightarrow X$, $\Tr_{U}(\zeta):\Tr_{U}(L)\rightarrow \Tr_{U}(X) $ is defined by $\Tr_{U}(\zeta)(l)=\zeta(l)=\sum_{i=1}^{k}\zeta f_{i}(u_{i})$ for any $l=\sum_{i=1}^{k}f_{i}(u_{i})\in \Tr_{U}(L)$. $L$ is (co)generated by $\mathcal{U}$ if and only if $\Tr_{U}(L)=L$ ($\rej_{\mathcal{U}}(L)=0$). In particular, $\Tr_{\mathcal{U}}(\Tr_{\mathcal{U}}(L))=\Tr_{\mathcal{U}}(L)$ and $\rej_{\mathcal{U}}(L/\rej_{\mathcal{U}}(L))=0$. From \cite{Anderson}, if $I$ is a right ideal of $R$ and $M$ is an $R$-module, then $\Tr_{R/I}(M)=\rej_{I}(M)$. For more consequences about trace and reject, we refer to \cite{Anderson}  and \cite{Wisbauer}.\par
 Recall that a cocomplete abelian category $\mathcal{A}$ is called a Grothendieck category if direct limits are exact in $\mathcal{A}$ and $\mathcal{A}$ has a generator. Let $M$ be an $R$-module. Denote by $\sigma[M]$ the the full subcategory
of $\Mod R$ whose objects are submodules of $M$-generated modules. Recall from \cite{Wisbauer} that $\sigma[M]$ is a Grothendieck category with enough injective objects, see \cite[17.8]{Wisbauer}. For any family $\{T_{\lambda}\}_{\lambda\in\Lambda}$ of modules in $\sigma[M]$,  the product in $\sigma[M]$ exists, see \cite[15.1(6)]{Wisbauer}. If $W$ is an injective module  in $\Mod R$, $\Tr_{M}(W)$ is an injective module in $\sigma[M]$. Moreover, if $W$ is a cogenerator for $\Mod R$, $\Tr_{M}(W)$ is a cogenerator for $\sigma[M]$. If the $R$-module $M$ is finitely generated as a module over $\End_{R}(M)$, then $\sigma[M]=\Mod (R/\ann_{R}(M))$, where $\ann_{R}(M)=\{r\in R~|~Mr=0\}$.
\section{Main result}
 Recall that for an object $M$ of $\Mod R$, the perpendicular category $^{\perp}M$ of  $M$ in $\Mod R$ is defined as follows
$$^{\perp}M=\{X\in\Mod R~|~\Ext^{1}_{R}(X,M)=0\}.$$
Let $\zeta:Q_{0}\rightarrow Q_{1}$ be an $R$-homomorphism in $\Mod R$. It has been shown that if $Q_{0}$ is injective and $T=\Ker \zeta$, then $\mathcal{B}_{\zeta}\subseteq {^{\perp}T}$, see \cite[Lemma 2.3 (4)]{Breaz}.
\begin{lemma}\label{lemma3-4}
Let $M$ be an $R$-module.  If for any  set $\Lambda$, $T^{\Lambda}\in\sigma[M]$ where $T^{\Lambda}$ is the product of $T$ in $\Mod R$, then the product $\prod_{\lambda\in\Lambda}^{M}T_{\lambda}$ of $T$ in $\sigma[M]$ is just the product of $T$ in $\Mod R$, that is $\prod_{\lambda\in\Lambda}^{M}T_{\lambda}=T^{\Lambda}$.
\end{lemma}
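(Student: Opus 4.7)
The plan is to check directly that $T^{\Lambda}$, equipped with its canonical projections $\pi_{\lambda}\colon T^{\Lambda}\to T$, satisfies the universal property of a product in $\sigma[M]$. Because $\sigma[M]$ is a full subcategory of $\Mod R$ and both $T$ and $T^{\Lambda}$ (by hypothesis) lie in $\sigma[M]$, each map $\pi_{\lambda}$ is automatically a morphism in $\sigma[M]$; in particular, no construction beyond the ambient one needs to be made.

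Next, given any $X\in\sigma[M]$ together with a family of morphisms $\{f_{\lambda}\colon X\to T\}_{\lambda\in\Lambda}$ in $\sigma[M]$, the universal property of the $\Mod R$-product produces a unique $R$-homomorphism $f\colon X\to T^{\Lambda}$ with $\pi_{\lambda}\circ f=f_{\lambda}$ for every $\lambda$. Since $X$ and $T^{\Lambda}$ both belong to the full subcategory $\sigma[M]$, this $f$ is itself a morphism in $\sigma[M]$, and its uniqueness there is inherited from the $\Mod R$-uniqueness. Hence $T^{\Lambda}$ with the projections $\pi_{\lambda}$ realises the product $\prod_{\lambda\in\Lambda}^{M}T_{\lambda}$ in $\sigma[M]$, yielding the required equality.

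I do not expect a serious obstacle: the argument reduces to two elementary facts, namely the fullness of the inclusion $\sigma[M]\hookrightarrow\Mod R$ and the standing hypothesis $T^{\Lambda}\in\sigma[M]$. One may alternatively invoke Wisbauer's description \cite[15.1(6)]{Wisbauer} of $\sigma[M]$-products as the largest submodule of the ambient product that still lies in $\sigma[M]$; under the present hypothesis this largest submodule is just $T^{\Lambda}$ itself, giving the same conclusion.
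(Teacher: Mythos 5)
Your proof is correct. Your main argument proceeds by direct verification of the universal property in the full subcategory $\sigma[M]$: since both $X$ and $T^{\Lambda}$ lie in $\sigma[M]$ and the inclusion $\sigma[M]\hookrightarrow\Mod R$ is full, the unique comparison map furnished by the ambient product in $\Mod R$ is automatically a $\sigma[M]$-morphism, and uniqueness in $\sigma[M]$ is inherited. This is a clean, self-contained categorical argument. The paper instead argues concretely through Wisbauer's construction: it picks a generator $U_f$ of $\sigma[M]$ (as in \cite[15.1(4)]{Wisbauer}) and uses the description of the $\sigma[M]$-product as $\Tr_{U_f}(T^{\Lambda})$ from \cite[15.1(6)]{Wisbauer}; the hypothesis $T^{\Lambda}\in\sigma[M]$ then forces $\Tr_{U_f}(T^{\Lambda})=T^{\Lambda}$. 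Your alternative remark at the end is essentially this trace argument rephrased. The two routes are of comparable length; yours is slightly more elementary in that it avoids invoking the explicit generator $U_f$, while the paper's is more in the spirit of how products in $\sigma[M]$ are actually built, which keeps the reader anchored to the reference being cited repeatedly elsewhere in the paper.
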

\begin{proof} Assume that $\mathcal{M}_{f}=\{U\subset M^{\mathbb{N}}~| U~\text{finitely generated}\}$. From \cite[15.1(4)]{Wisbauer}, $U_{f}=\bigoplus\{U~|~U\in\mathcal{M}_{f}\}$ is a generator of $\sigma[M]$. Since $T^{\Lambda}\in\sigma[M]$, $T^{\Lambda}$ is generated by $U_{f}$. From \cite[15.1(6)]{Wisbauer}, we have that  $\prod_{\lambda\in\Lambda}^{M}T_{\lambda}=\Tr_{U_{f}}(T^{\Lambda})= T^{\Lambda}$.
\end{proof}

Let $\mathcal{A}$ be an abelian category. For any integer $i\geq1$, $\Ext^{i}_{\mathcal{A}}(A,B)$ denotes the extension group, which
is defined via its Yoneda description as certain equivalent classes of $i$-fold extensions. Assume that the product of  $T$ in $\mathcal{A}$ exists. Recall that $T$ is said to be partial cotilting  in an abelian category $\mathcal{A}$ if $\cg_{\mathcal{A}} T\subseteq \Ker\Ext^{1}_{\mathcal{A}}(-,T)$ in $\mathcal{A}$. Moreover, if $\cg_{\mathcal{A}} T= \Ker\Ext^{1}_{\mathcal{A}}(-,T)$  in $\mathcal{A}$, then $T$ is called cotilting.
\begin{proposition}\label{prop3-1}
Let $I$ be a right ideal of a ring $R$ and $\zeta:Q_{0}\rightarrow Q_{1}$ is an $R$-homomorphism between  injective modules with $T=\Ker \zeta$. If $T$ is a  cotilting object in $\sigma[R/I]$ with the injective coresolving
\begin{equation}\label{eq3-0}
0\longrightarrow T\longrightarrow   \Tr_{R/I}(Q_{0})\xrightarrow{\Tr_{R/I}(\zeta)}\Tr_{R/I}(Q_{1})\longrightarrow  0
\end{equation}
 in $\sigma[R/I]$ and $\Ext^{1}_{\sigma[R/I]}(R/I,T)=0$, then
 $R/I\in\mathcal{B}_{\zeta}$, $\cg T\subseteq\sigma[R/I]$ and $\cg T\subseteq\mathcal{B}_{\zeta}$.
\end{proposition}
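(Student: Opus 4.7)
The plan is to verify the three conclusions in turn, using throughout the identification $\Hom_R(R/I,M)\cong\{m\in M:mI=0\}\subseteq\Tr_{R/I}(M)$ together with the $\sigma[R/I]$-injectivity of $\Tr_{R/I}(Q_0)$ and $\Tr_{R/I}(Q_1)$. For the first conclusion, $R/I\in\mathcal{B}_\zeta$, I would apply $\Hom_{\sigma[R/I]}(R/I,-)$ to the given injective coresolution. Because $\Tr_{R/I}(Q_0),\Tr_{R/I}(Q_1)$ are injective in $\sigma[R/I]$ and $\Ext^1_{\sigma[R/I]}(R/I,T)=0$ by hypothesis, the resulting sequence of $\Hom_{\sigma[R/I]}$-groups has a surjective terminal map. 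Under the natural identification $\Hom_{\sigma[R/I]}(R/I,\Tr_{R/I}(Q_j))=\Hom_R(R/I,Q_j)$ (both sides equal the $I$-annihilator of $Q_j$, which automatically lies in $\Tr_{R/I}(Q_j)$), this says $\Hom_R(R/I,\zeta)$ is epic, i.e., $R/I\in\mathcal{B}_\zeta$.

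Next, for $\cg T\subseteq\sigma[R/I]$: let $X\in\cg T$, witnessed by an embedding $X\hookrightarrow T^{\Lambda}$ in $\Mod R$. Composing with $T\hookrightarrow\Tr_{R/I}(Q_0)$ furnished by the coresolution gives $X\hookrightarrow\Tr_{R/I}(Q_0)^{\Lambda}$. The module $\Tr_{R/I}(Q_0)$ is $R/I$-generated, and every object of $\sigma[R/I]$ is annihilated by the two-sided ideal $\ann_R(R/I)$; I would argue, by cyclic subobject analysis and an application of Lemma \ref{lemma3-4}, that $\Tr_{R/I}(Q_0)^{\Lambda}$ in fact lies in $\sigma[R/I]$, whence so does its submodule $X$.

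For $\cg T\subseteq\mathcal{B}_\zeta$: let $X\in\cg T$, so $X\in\sigma[R/I]$ by the previous step. Given any $g:X\to Q_1$, the injectivity of $Q_1$ in $\Mod R$ extends $g$ along an embedding $X\hookrightarrow M$ with $M$ an $R/I$-generated module (available since $X\in\sigma[R/I]$); because $M$ is generated by $I$-annihilated elements, the extension has image in the $I$-annihilator of $Q_1$, hence inside $\Tr_{R/I}(Q_1)$, and so does $g$. By the universal property of the product in $\sigma[R/I]$, the embedding $X\hookrightarrow T^{\Lambda}$ factors through $\prod^{\sigma[R/I]}_{\Lambda}T=\Tr_{R/I}(T^{\Lambda})$, placing $X$ in $\cg_{\sigma[R/I]}T=\Ker\Ext^1_{\sigma[R/I]}(-,T)$. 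The $\sigma[R/I]$-injectivity of $\Tr_{R/I}(Q_0)$ combined with $\Ext^1_{\sigma[R/I]}(X,T)=0$ then lifts $g:X\to\Tr_{R/I}(Q_1)$ through $\Tr_{R/I}(\zeta)$ to some $f:X\to\Tr_{R/I}(Q_0)\subseteq Q_0$ with $\zeta f=g$, establishing $X\in\mathcal{B}_\zeta$.

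The principal obstacle I anticipate is the second step: $\sigma[R/I]$ is not in general closed under products taken in $\Mod R$, so placing $\Tr_{R/I}(Q_0)^{\Lambda}$ inside $\sigma[R/I]$ is delicate and relies essentially on Lemma \ref{lemma3-4} combined with the uniform annihilation by $\ann_R(R/I)$. A related subtlety in the third step is ensuring that the $\Mod R$-embedding $X\hookrightarrow T^{\Lambda}$ actually factors through the smaller $\sigma[R/I]$-product $\Tr_{R/I}(T^{\Lambda})$; this is precisely the content of the universal property of products in $\sigma[R/I]$ once $X\in\sigma[R/I]$ has been established.
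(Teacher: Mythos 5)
Your first conclusion ($R/I\in\mathcal{B}_{\zeta}$) is handled exactly as in the paper. The genuine gap is in your second step, where you want $\cg T\subseteq\sigma[R/I]$. You embed $X\in\cg T$ into $\Tr_{R/I}(Q_{0})^{\Lambda}$ and then propose to place $\Tr_{R/I}(Q_{0})^{\Lambda}$ in $\sigma[R/I]$ using annihilation by $\ann_{R}(R/I)$ together with Lemma~\ref{lemma3-4}. That does not go through: $\sigma[R/I]$ is in general not closed under products formed in $\Mod R$, and being killed by $\ann_{R}(R/I)$ is a \emph{necessary} but not \emph{sufficient} condition for membership in $\sigma[R/I]$ --- the equality $\sigma[R/I]=\Mod (R/\ann_{R}(R/I))$ holds only under extra hypotheses such as $R/I$ being finitely generated over $\End_{R}(R/I)$. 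Lemma~\ref{lemma3-4} cannot rescue this either, since it \emph{assumes} $T^{\Lambda}\in\sigma[M]$ as a hypothesis in order to identify the product; it does not establish such membership. A revealing symptom is that your argument for this conclusion never invokes that $T$ is cotilting in $\sigma[R/I]$, whereas the paper's proof uses that hypothesis essentially: it first argues by contradiction that $\Hom_{R}(R/I,T)\neq 0$ (if not, then $R/I\in\Ker\Hom_{\sigma[R/I]}(-,T)$ and, since $R/I\in\mathcal{B}_{\zeta}\subseteq{^{\perp}T}$, also $R/I\in\Ker\Ext^{1}_{\sigma[R/I]}(-,T)=\cg_{\sigma[R/I]}T$, forcing $R/I=0$ and hence $T=0$), and from this deduces that each product $T^{\mu}$ is itself $R/I$-generated, so that $T^{\mu}\in\sigma[R/I]$ and only \emph{then} does Lemma~\ref{lemma3-4} apply.

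Your third conclusion, assuming the second, is a genuinely different and cleaner argument than the paper's: you factor $X\hookrightarrow T^{\Lambda}$ through the $\sigma[R/I]$-product to place $X\in\cg_{\sigma[R/I]}T=\Ker\Ext^{1}_{\sigma[R/I]}(-,T)$ and then lift directly along $\Tr_{R/I}(\zeta)$ using the $\sigma[R/I]$-injectivity of $\Tr_{R/I}(Q_{0})$; the paper instead establishes $\Hom_{R}(T^{\mu},M/\Tr_{R/I}(M))=0$, uses that to kill $\Ext^{1}_{R}(T^{\mu},\Tr_{R/I}(Q_{0}))$, and concludes with a snake-lemma diagram chase. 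But everything hinges on the missing second step, which needs to be repaired along the paper's lines: use the cotilting hypothesis to show $T^{\mu}$ is generated by $R/I$, rather than trying to control products of $\Tr_{R/I}(Q_{0})$.
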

\begin{proof}
 First, we prove that $R/I\in\mathcal{B}_{\zeta}$. Applying the functor $\Hom_{\sigma[R/I]}(R/I,-)$  to the short exact sequence (\ref{eq3-0}),
we have the following exact sequence
$$\Hom_{\sigma[R/I]}(R/I,\Tr_{R/I}(Q_{0}))\rightarrow\Hom_{\sigma[R/I]}(R/I,\Tr_{R/I}(Q_{1}))\rightarrow\Ext^{1}_{\sigma[R/I]}(R/I,T)$$
From \cite[Exercises 8 (7)]{Anderson},  we have isomorphisms
$\Hom_{R}(R/I,\Tr_{R/I}(Q_{i}))\cong \Hom_{R}(R/I,Q_{i})$, for $i=1,2$. Since $\sigma[R/I]$ is a full subcategory of $\Mod R$,  we know that $ \Hom_{\sigma[R/I]}(R/I,\Tr_{R/I}(Q_{0}))=\Hom_{R}(R/I,\Tr_{R/I}(Q_{i}))$, for $i=1,2$.
Note that $\Ext^{1}_{\sigma[R/I]}(R/I,T)=0$. Thus, we have that $R/I\in\mathcal{B}_{\zeta}$.\par
Assume that $T$ is not zero.  It remains to show that $T^{\mu}$ is generated by $R/I$  and $T^{\mu}\in \mathcal{B}_{\zeta}$ for all sets $\mu$ since $\sigma[R/I]$ and $\mathcal{B}_{\zeta}$ are closed under submodules.
We divided the proof into several steps.
\smallskip

\textbf{Step 1}. We prove that $\Tr_{R/I}(T^{\mu})$ is a nonzero submodule of $T^{\mu}$.
\smallskip

 If $\Hom_{R}(R/I,T )=0$, then $R/I\in\Ker\Hom_{\sigma[R/I]}(-,T)$. Note that  $R/I\in \mathcal{B}_{\zeta}\subseteq {^{\perp}T}$. Then $R/I\in \Ker\Ext^{1}_{\sigma[R/I]}(-,T)$ since $\Ext^{1}_{\sigma[R/I]}(R/I,T)\subseteq \Ext^{1}_{R}(R/I,T)$. Since $T$ is a cotilting object in $\sigma[R/I]$, $\cg_{\sigma[R/I]} T=\Ker\Ext^{1}_{\sigma[R/I]}(-,T)$.  Thus, $R/I$ is cogenerated by $T$ in $\sigma[R/I]$. Then there exist a monomorphism $R/I\rightarrow \prod_{\theta\in\Theta}^{R/I}T_{\theta}$, where $\prod_{\theta\in\Theta}^{R/I}T_{\theta}$ is the product of $T$ in $\sigma[R/I]$. By \cite[15.1(6)]{Wisbauer}, there exists a monomorphism $\prod_{\theta\in\Theta}^{R/I}T_{\theta}\rightarrow T^{\Theta}$, where $T^{\Theta}$ is the product of $T$ in $\Mod R$. Thus, there is a monomorphism $R/I\rightarrow T^{\Theta}$. It implies that $\Hom_{R}(R/I,R/I)=0$. Thus, $R/I=0$ and so $\Tr_{R/I}(Q_{0})=0$. Form the exact sequence (\ref{eq3-0}), we know that $T=0$. It is a contradiction. Hence, $\Hom_{R}(R/I,T )\neq0$ and so $\Hom_{R}(R/I,T^{\mu} )\neq0$. Therefore, $\Tr_{R/I}(T^{\mu})$ is a nonzero module.\par
\smallskip

\textbf{Step 2}. We prove that $T^{\mu}$ is generated by $R/I$.
\smallskip

From \cite[Exercises 8 (7)]{Anderson}, we have an isomorphism $$\Hom_{R}(R/I, \Tr_{R/I}(T^{\mu}))\cong \Hom_{R}(R/I, T^{\mu}).$$ Since $\Tr_{R/I}(T^{\mu})$ is generated by $R/I$, there exists a nonzero epimorphism of $R$-modules  $f:{R/I}^{(\lambda)} \rightarrow\Tr_{R/I}(T^{\mu})$ where $\lambda$ is a set. For the set $\lambda$, there exists isomorphism $$\Hom_{R}({R/I}^{(\lambda)}, \Tr_{R/I}(T^{\mu}))\cong \Hom_{R}({R/I}^{(\lambda)}, T^{\mu}).$$ Then there exists an epimorphism $f':{R/I}^{(\lambda)} \rightarrow T^{\mu}$. Thus, $T^{\mu}$ is generated by $R/I$. Thus, $\cg T\subseteq \sigma[R/I]$ since $\sigma[R/I]$ is closed under submodules.
\smallskip

\textbf{Step 3.} We claim that $\Hom_{R}(T^{\mu},M/\Tr_{R/I}(M))=0$ for any $M\in\Mod R$.
\smallskip

 We set $h\in\Hom_{R}(T^{\mu},M/\Tr_{R/I}(M))$. For the epimorphism $f':{R/I}^{(\lambda)} \rightarrow T^{\mu}$, let $\varepsilon_{\lambda_{i}}:(R/I)_{\lambda_{i}}\rightarrow {R/I}^{(\lambda)}$ be the $\lambda_{i}$-th  canonical embedding with $\lambda_{i}\in\lambda$. Then for any $t\in T^{\mu}$, there exists a family $\{\overline{r_{i}}\}^{n}_{i=1}\in R/I$ such that  $\sum_{i}^{n}f'\varepsilon_{\lambda_{i}}(\overline{r_{i}})=t$. Hence, $h(t)=\sum_{i}^{n}hf'\varepsilon_{\lambda_{i}}(\overline{r_{i}})\in \Tr_{R/I}(M/\Tr_{R/I}(M))$. Note that $\Tr_{R/I}(M/\Tr_{R/I}(M))=\rej_{I}(M/\rej_{I}(M))=0$. Thus, we know that  $\Hom_{R}(T^{\mu},M/\Tr_{R/I}(M))=0$.
 \smallskip

\textbf{Step 4.} We prove that $T^{\mu}\in\mathcal{B}_{\zeta}$.
\smallskip

Since $T^{\mu}$ is generated by $R/I$, $T^{\mu}\in\sigma[R/I]$.  By Lemma \ref{lemma3-4},   $T^{\mu}=\prod_{\kappa\in\mu}^{R/I}T_{\kappa}$ in $\sigma[R/I]$.   Applying the functor $\Hom_{\sigma[R/I]}(T^{\mu},-)$ to the short exact sequence (\ref{eq3-0}), we have the following exact sequence
 $$\Hom_{\sigma[R/I]}(T^{\mu},\Tr_{R/I}(Q_{0}))\rightarrow \Hom_{\sigma[R/I]}(T^{\mu},\Tr_{R/I}(Q_{1}))\rightarrow\Ext^{1}_{\sigma[R/I]}(T^{\mu},T)$$
Since $T$ is a partial cotilting object in $\sigma[R/I]$ and  $\sigma[R/I]$ is a full subcategory of $\Mod R$, we have $\Ext^{1}_{\sigma[R/I]}(T^{\mu},T)=\Ext^{1}_{\sigma[R/I]}(\prod_{\kappa\in\mu}^{R/I}T_{\kappa},T)=0$ and so, $$\Hom_{R}(T^{\mu},\Tr_{R/I}(\zeta)):\Hom_{R}(T^{\mu},\Tr_{R/I}(Q_{0}))\rightarrow \Hom_{R}(T^{\mu},\Tr_{R/I}(Q_{1}))$$ is surjective. Applying $\Hom_{R}(T^{\mu},-)$ to the short exact sequence (\ref{eq3-0}), we have the following exact sequence
\begin{equation}\label{eq3-1}
\Hom_{R}(T^{\mu},\Tr_{R/I}(Q_{0}))\rightarrow \Hom_{R}(T^{\mu},\Tr_{R/I}(Q_{1}))\rightarrow\Ext^{1}_{R}(T^{\mu},T)\rightarrow\Ext^{1}_{R}(T^{\mu},\Tr_{R/I}(Q_{0})).
\end{equation}
Now, we shall prove that $\Ext^{1}_{R}(T^{\mu},\Tr_{R/I}(Q_{0}))=0$.
Applying $\Hom_{R}(T,-)$ to the short exact sequence $$0\rightarrow \Tr_{R/I}(Q_{0})\rightarrow  Q_{0}\rightarrow Q_{0}/\Tr_{R/I}(Q_{0})\rightarrow 0,$$ we have the following exact sequence
\begin{equation}\label{eq3-2}
\Hom_{R}(T^{\mu},Q_{0}/\Tr_{R/I}(Q_{0}))\rightarrow\Ext^{1}_{R}(T^{\mu},\Tr_{R/I}(Q_{0}))\rightarrow\Ext^{1}_{R}(T^{\mu},Q_{0}).
\end{equation}
  From the injectivity of $Q_{0}$ in $\Mod R$ and Step 3, we have that $\Ext^{1}_{R}(T^{\mu},\Tr_{R/I}(Q_{0}))=0$.\par
 From the exact sequence (\ref{eq3-1}), we obtain that $\Ext^{1}_{R}(T^{\mu},T)=0$. That is, $T^{\mu}\in {^{\perp}T}$.\par
 Now, we have a short exact sequence $0\rightarrow K\rightarrow {R/I}^{(\lambda)}\xrightarrow{f'} T^{\mu}\rightarrow 0$ with ${R/I}^{(\lambda)}\in \mathcal{B}_{\zeta}$. Since $ \mathcal{B}_{\zeta}$ is closed under submodules, $K\in \mathcal{B}_{\zeta}$. Then we have the following exact commutative diagram
 $$\xymatrix{&0\ar[d]&0\ar[d]&0\ar[d]&\\
 0\ar[r]&\Hom_{R}(T^{\mu}, T)\ar[r]\ar[d]&\Hom_{R}(T^{\mu}, Q_{0})\ar[r]\ar[d]&\Hom_{R}(T^{\mu}, Q_{1})\ar[d]&\\
 0\ar[r]&\Hom_{R}({R/I}^{(\lambda)}, T)\ar[r]\ar[d]&\Hom_{R}({R/I}^{(\lambda)}, Q_{0})\ar[r]\ar[d]&\Hom_{R}({R/I}^{(\lambda)}, Q_{1})\ar[r]\ar[d]&0\\
   0\ar[r]&\Hom_{R}(K, T)\ar[d]\ar[r]&\Hom_{R}(K, Q_{0})\ar[r]\ar[d]&\Hom_{R}(K, Q_{1})\ar[r]\ar[d]&0\\                 &0&0&0& }$$
   Applying Snake Lemma, we know that the first row is a short exact sequence, and so $T^{\mu}\in\mathcal{B}_{\zeta}$.
\end{proof}
\begin{lemma}\label{lemma3-3}
Let $I$ be a right ideal of a ring $R$ and $\zeta:Q_{0}\rightarrow Q_{1}$ is an $R$-homomorphism between injective modules with $T=\Ker \zeta$. If $T$ is a cotilting object in $\sigma[R/I]$ such that $R/I\in {^{\perp}T}$, $\cg T\subseteq \sigma[R/I]$ and $\cg T\subseteq {^{\perp}T}$, then for any injective $R$-module $W$, there exists a short exact sequence in $\sigma[R/I]$
 $$0\rightarrow T_{1}\rightarrow T_{0}\rightarrow \Tr_{R/I}(W)\rightarrow 0,$$
where $T_{i}\in \Prod T$.
\end{lemma}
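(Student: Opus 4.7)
The plan is to construct the required sequence by realizing $\Tr_{R/I}(W)$ as a splittable quotient of a product of copies of $T$. Since $W$ is injective in $\Mod R$, the cited fact from \cite{Wisbauer} gives that $\Tr_{R/I}(W)$ is injective in $\sigma[R/I]$, so any $\sigma[R/I]$-monomorphism out of $\Tr_{R/I}(W)$ splits. Hence the whole task reduces to producing a $\sigma[R/I]$-monomorphism $\Tr_{R/I}(W)\hookrightarrow T^{\mu}$ for some set $\mu$: once this is available, the splitting yields $T^{\mu}\cong \Tr_{R/I}(W)\oplus T_{1}$, and setting $T_{0}=T^{\mu}$ gives both $T_{0}\in\Prod T$ trivially and $T_{1}\in\Prod T$ as a direct summand of $T^{\mu}$, producing the required short exact sequence $0\to T_{1}\to T_{0}\to \Tr_{R/I}(W)\to 0$.

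The key step is therefore to show $\Tr_{R/I}(W)\in\cg_{\sigma[R/I]} T$. By the cotilting hypothesis, $\cg_{\sigma[R/I]} T=\Ker\Ext^{1}_{\sigma[R/I]}(-,T)$, so this is equivalent to $\Ext^{1}_{\sigma[R/I]}(\Tr_{R/I}(W),T)=0$. I would exploit the injective coresolution $0\to T\to \Tr_{R/I}(Q_{0})\xrightarrow{\Tr_{R/I}(\zeta)} \Tr_{R/I}(Q_{1})\to 0$ of $T$ in $\sigma[R/I]$; applying $\Hom_{\sigma[R/I]}(\Tr_{R/I}(W),-)$ realizes this $\Ext^{1}$ as the cokernel of $\Hom_{R}(\Tr_{R/I}(W),\Tr_{R/I}(Q_{0}))\to\Hom_{R}(\Tr_{R/I}(W),\Tr_{R/I}(Q_{1}))$. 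By the trace--Hom adjunction $\Hom_{R}(N,\Tr_{R/I}(M))\cong\Hom_{R}(N,M)$ for $R/I$-generated $N$ (\cite[Exercises~8(7)]{Anderson}), this cokernel coincides with that of $\Hom_{R}(\Tr_{R/I}(W),Q_{0})\to\Hom_{R}(\Tr_{R/I}(W),Q_{1})$, and I would verify its vanishing by a Snake--Lemma style diagram chase modelled on Step~4 of the proof of Proposition \ref{prop3-1}, using the hypotheses $R/I\in{}^{\perp}T$, $\cg T\subseteq{}^{\perp}T$, and the injectivity of the $Q_{i}$ in $\Mod R$.

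Once $\Tr_{R/I}(W)\in\cg_{\sigma[R/I]} T$ is established, a monomorphism $\Tr_{R/I}(W)\hookrightarrow\prod_{\kappa\in\mu}^{R/I}T_{\kappa}$ is available. Because $T^{\mu}$ is cogenerated by $T$, the hypothesis $\cg T\subseteq\sigma[R/I]$ places $T^{\mu}\in\sigma[R/I]$, so Lemma \ref{lemma3-4} identifies the $\sigma[R/I]$-product with the $\Mod R$-product: $\prod_{\kappa\in\mu}^{R/I}T_{\kappa}=T^{\mu}$. Combining with the splitting argument in the first paragraph closes the proof. The main obstacle is the Ext-vanishing step: injectivity of $\Tr_{R/I}(W)$ in $\sigma[R/I]$ only kills $\Ext^{1}_{\sigma[R/I]}(-,\Tr_{R/I}(W))$, which is the wrong variance, so one must route the computation through the injective coresolution of $T$ together with Anderson's trace--Hom isomorphism and a careful use of all three perpendicularity hypotheses to convert the problem into a concrete Hom-surjectivity question.
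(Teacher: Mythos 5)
Your plan is built on the claim that $\Tr_{R/I}(W)\in\cg_{\sigma[R/I]}T$, equivalently $\Ext^{1}_{\sigma[R/I]}(\Tr_{R/I}(W),T)=0$, and that claim is false in general. The hypotheses of the lemma do not force the injective cogenerator of $\sigma[R/I]$ to be \emph{cogenerated} by $T$; they only make it a \emph{quotient} of a product of copies of $T$, which is what the statement actually asks you to exhibit. These two conditions are dual and very far apart. Concretely, in Example~\ref{ex1} one has $T=\begin{smallmatrix}1\end{smallmatrix}\oplus\begin{smallmatrix}2\\1\end{smallmatrix}$ and $\Tr_{A/I}\bigl(\begin{smallmatrix}3\\2\end{smallmatrix}\bigr)=\begin{smallmatrix}2\end{smallmatrix}$; the socle of any product $T^{\mu}$ lives in the $S_{1}$-socle of its factors, so the simple $S_{2}$ admits no monomorphism into $T^{\mu}$, and indeed $\Ext^{1}_{\sigma[A/I]}(S_{2},T)\neq 0$ because $\Hom(S_{2},\Tr_{A/I}(Q_{0}))=0$ while $\Hom(S_{2},\Tr_{A/I}(Q_{1}))\neq 0$. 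So the Ext-vanishing you propose to establish in your second paragraph is simply not there to be proven, and the split-off argument of your first paragraph (which would even yield $\Tr_{R/I}(W)\in\Prod T$) does not get off the ground.

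The paper works in the opposite direction. Since $\Tr_{R/I}(W)$ is $R/I$-generated there is an epimorphism $(R/I)^{(\lambda)}\to\Tr_{R/I}(W)$; the hypothesis $R/I\in{}^{\perp}T$ makes $(R/I)^{(\lambda)}$ lie in $\Ker\Ext^{1}_{\sigma[R/I]}(-,T)=\cg_{\sigma[R/I]}T$, hence it embeds into some $T^{\Lambda}$, and the injectivity of $\Tr_{R/I}(W)$ is then used to \emph{extend} the epimorphism along this monomorphism to get a surjection $T^{\Lambda}\twoheadrightarrow\Tr_{R/I}(W)$. The remaining work is all about repairing the kernel: one applies a Bongartz-type lemma (\cite[Lemma~4.2.1]{Colby}) to the kernel $K$, forms a push-out, shows the new middle term $U$ is Ext-orthogonal to $T$ and cogenerated by $T$, and finally shows a second such sequence splits so that $U\in\Prod T$. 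That push-out/Bongartz machinery is the genuine content of the proof, and it is exactly the machinery your proposal skips by trying to place $\Tr_{R/I}(W)$ on the cogenerated side from the start.
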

\begin{proof}
Assume that $W$ is an injective $R$-module. Since $\Tr_{R/I}(W)$ is generated by $R/I$, there is an epimorphism $f:{R/I}^{(\lambda)}\rightarrow\Tr_{R/I}(W)$ in $\sigma[R/I]$. By the assumption on $R/I$, ${R/I}^{(\lambda)}\in \Ker\Ext^{1}_{\sigma[R/I]}(-,T)$ since $\Ext^{1}_{\sigma[R/I]}({R/I}^{(\lambda)},T)\subseteq \Ext^{1}_{R}({R/I}^{(\lambda)},T)=\Ext^{1}_{R}({R/I},T)^{\lambda}=0$. As $T$ is a cotilting object in $\sigma[R/I]$, $\cg_{\sigma[R/I]}T=\Ker\Ext^{1}_{\sigma[R/I]}(-,T)$. Thus, there is a monomorphism ${R/I}^{(\lambda)}\rightarrow \prod_{\lambda\in\Lambda}^{R/I}T$ in $\sigma[R/I]$. Since $\cg T\subseteq \sigma[R/I]$, $\prod_{\lambda\in\Lambda}^{R/I}T=T^{\Lambda}$ by Lemma \ref{lemma3-4}. Then there is a monomorphism ${R/I}^{(\lambda)}\rightarrow T^{\Lambda}$ in $\sigma[R/I]$. By the injectivity of $\Tr_{R/I}(W)$ in $\sigma[R/I]$, there is a morphism $f':T^{\Lambda}\rightarrow \Tr_{R/I}(W)$ such that the following diagram is commutative.
$$\xymatrix{
  0\ar[r]&{R/I}^{(\lambda)} \ar[d]_{f} \ar[r]^{} & \ar[dl]^{f'} T^{\Lambda}      \\
  &\Tr_{R/I}(W) }$$
It implies that $f' $  is epic since $f$ is surjective. Hence, we have a short exact sequence in  $\sigma[R/I]$
 $$0\rightarrow K\rightarrow T^{\Lambda}\xrightarrow{f'} \Tr_{R/I}(W)\rightarrow0 $$
where $K=\Ker f'\in \cg T$. By \cite[Lemma 4.2.1]{Colby}, there exists a short exact sequence in $\sigma[R/I]$
$$0\rightarrow K\xrightarrow{\eta} T^{\kappa}\rightarrow Y\rightarrow0 $$
such that  $\Hom_{R}(\eta,T)$ is surjective. Applying $\Hom_{R}(-,T)$ to the above sequence, we have the following exact sequence
$$0\rightarrow \Ext_{R}^{1}(Y,T)\rightarrow \Ext_{R}^{1}(T^{\kappa},T)\rightarrow \Ext_{R}^{1}(K,T).$$
Since $T^{\kappa}\in {^{\perp}T}$, $\Ext_{R}^{1}(Y,T)=0$ and so $\Ext_{\sigma[R/I]}^{1}(Y,T)=0$. Then $Y\in \cg_{\sigma[R/I]}T$ since $\cg_{\sigma[R/I]}T=\Ker\Ext^{1}_{\sigma[R/I]}(-,T)$. Thus, $Y\in \cg T$. Consider the push-out diagram in $\sigma[R/I]$
\begin{equation}\label{eq3-4}
\begin{split}
                     \xymatrix{&0\ar[d]&0\ar[d]&&\\
0\ar[r]&K\ar[r]\ar[d]&T^{\Lambda}\ar[d]\ar[r]&\Tr_{R/I}(W)\ar@{=}[d]\ar[r]&0\\
  0\ar[r]&T^{\kappa}\ar[d]\ar[r]&U\ar[r]\ar[d]&\Tr_{R/I}(W)\ar[r]&0 \\
  &Y\ar@{=}[r]\ar[d]&Y\ar[d]&& \\
  &0&0&&           }
                    \end{split}
\end{equation}
  Since $T^{\Lambda}$ and $Y$ are in ${^{\perp}T}$, we have $U\in{^{\perp}T}$. Thus, $U\in \Ker\Ext^{1}_{\sigma[R/I]}(-,T)$. By the similar arguments to $Y$, we know that $U\in \cg T$. Applying \cite[Lemma 4.2.1]{Colby} again and the similar arguments to $K$, there exists a short exact sequence in $\sigma[R/I]$
  \begin{equation}\label{eq3-5}
  0\rightarrow U\rightarrow T^{\alpha}\rightarrow L\rightarrow0.
\end{equation}
  where $L\in{^{\perp}T}$. Applying $\Hom_{\sigma[R/I]}(L,-)$ to the middle row in the diagram (\ref{eq3-4}), we have the following exact sequence
  $$\Ext^{1}_{\sigma[R/I]}(L, T^{\kappa})\rightarrow \Ext^{1}_{\sigma[R/I]}(L, U)\rightarrow \Ext^{1}_{\sigma[R/I]}(L,\Tr_{R/I}(W)).$$
  Since $\Ext^{1}_{\sigma[R/I]}(L, T^{\kappa})\subseteq \Ext^{1}_{R}(L, T^{\kappa})$ and $L\in{^{\perp}T}$, we know that $\Ext^{1}_{\sigma[R/I]}(L, T^{\kappa})=0$. By the injectivity of $\Tr_{R/I}(W)$ in $\sigma[R/I]$ , we have $\Ext^{1}_{\sigma[R/I]}(L,\Tr_{R/I}(W))=0$. It yields that $\Ext^{1}_{\sigma[R/I]}(L, U)=0$. It means that each short exact sequence $0\rightarrow U\rightarrow N\rightarrow L\rightarrow 0$ in $\sigma[R/I]$  is split. Thus the sequence (\ref{eq3-5}) is split and so $U\in \Prod T$.  The short exact sequence
  $$0\rightarrow T^{\kappa}\rightarrow U\rightarrow \Tr_{R/I}(W)\rightarrow0$$
  is the desired sequence.
\end{proof}
\begin{proposition}\label{prop3-4}
Let $I$ be a right ideal of a ring $R$ and $\zeta:Q_{0}\rightarrow Q_{1}$ is an $R$-homomorphism between  injective modules with $T=\Ker \zeta$. If $T$ and $I$ satisfy the following conditions:
  \begin{enumerate}
    \item $T$ is a cotilting object in $\sigma[R/I]$ with the injective coresolving
\begin{equation*}
0\longrightarrow  T\longrightarrow   \Tr_{R/I}(Q_{0})\xrightarrow{\Tr_{R/I}(\zeta)}\Tr_{R/I}(Q_{1})\longrightarrow  0.
\end{equation*}
    \item $\Ext^{1}_{\sigma[R/I]}(R/I,T)=0$.
    \item  {\rm$I\in\Ker\Hom_{R}(-,\mathcal{B}_{\zeta})$}.
  \end{enumerate}
then $T$ is a cosliting module in $\Mod R$.
\end{proposition}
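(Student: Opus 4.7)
The plan is to verify that the given copresentation $0\to T\to Q_{0}\xrightarrow{\zeta}Q_{1}$ witnesses $T$ as a cosilting module, i.e., to prove $\cg T=\mathcal{B}_{\zeta}$. Applying Proposition \ref{prop3-1} to hypotheses (1) and (2) already delivers $\cg T\subseteq\mathcal{B}_{\zeta}$, $\cg T\subseteq\sigma[R/I]$ and $R/I\in\mathcal{B}_{\zeta}$; together with the general inclusion $\mathcal{B}_{\zeta}\subseteq{}^{\perp}T$ recalled just before Lemma \ref{lemma3-4}, this reduces the argument to proving the reverse containment $\mathcal{B}_{\zeta}\subseteq\cg T$. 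I plan to obtain it by invoking Proposition \ref{prop2-2}.

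The first step is to translate condition (3) into a containment $\mathcal{B}_{\zeta}\subseteq\sigma[R/I]$. Given $T'\in\mathcal{B}_{\zeta}$, the vanishing $\Hom_{R}(I,T')=0$ together with the sequence $0\to I\to R\to R/I\to 0$ forces $tI=0$ for every $t\in T'$; hence $T'$ is an $R/I$-module and in particular is $R/I$-generated as an $R$-module, so $T'\in\sigma[R/I]$. Consequently the image of every $R$-homomorphism $g:T'\to X$ is automatically contained in $\Tr_{R/I}(X)$, a fact I will use for the lifting argument below.

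Next I construct the auxiliary sequence required by Proposition \ref{prop2-2}. Fix an injective cogenerator $E$ of $\Mod R$. The hypotheses of Lemma \ref{lemma3-3} are all in place: $T$ is a cotilting object in $\sigma[R/I]$ by (1); $\cg T\subseteq\sigma[R/I]$ by Proposition \ref{prop3-1}; and $\cg T\subseteq\mathcal{B}_{\zeta}\subseteq{}^{\perp}T$ together with $R/I\in\mathcal{B}_{\zeta}\subseteq{}^{\perp}T$ supply the remaining Ext-vanishing. Lemma \ref{lemma3-3} then yields a short exact sequence
$$0\to T_{1}\to T_{0}\to \Tr_{R/I}(E)\to 0$$
in $\sigma[R/I]$ with $T_{0},T_{1}\in\Prod T$. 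Composing $T_{0}\to\Tr_{R/I}(E)$ with the inclusion $\Tr_{R/I}(E)\hookrightarrow E$ produces a morphism $\xi:T_{0}\to E$ with $\Ker\xi=T_{1}$, providing the exact sequence $0\to T_{1}\to T_{0}\xrightarrow{\xi}E$ needed by Proposition \ref{prop2-2}.

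The main substantive step, and really the only nontrivial point, is to verify that $\Hom_{R}(T',\xi)$ is surjective for every $T'\in\mathcal{B}_{\zeta}$. Since $T_{1}\in\Prod T$, it is a direct summand of some product $T^{\alpha}$; this $T^{\alpha}$ lies in $\cg T\subseteq\sigma[R/I]$, so by Lemma \ref{lemma3-4} the products of $T$ in $\sigma[R/I]$ and in $\Mod R$ coincide. Because $\Ext^{1}_{R}$ converts products in the second variable into products and $\Ext^{1}_{R}(T',T)=0$ thanks to $\mathcal{B}_{\zeta}\subseteq{}^{\perp}T$, one obtains $\Ext^{1}_{R}(T',T_{1})=0$. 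The long exact sequence from the short exact sequence above then yields a surjection $\Hom_{R}(T',T_{0})\twoheadrightarrow\Hom_{R}(T',\Tr_{R/I}(E))$. By the second paragraph, any $g:T'\to E$ factors through $\Tr_{R/I}(E)$, and therefore lifts to some $\tilde g:T'\to T_{0}$ with $\xi\tilde g=g$. Hence $\Hom_{R}(T',\xi)$ is epic, and Proposition \ref{prop2-2} gives $\mathcal{B}_{\zeta}\subseteq\cg T$, finishing the proof that $T$ is cosilting.
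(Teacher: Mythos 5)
Your proof is correct and follows essentially the same route as the paper: invoke Proposition \ref{prop3-1} to reduce to $\mathcal{B}_{\zeta}\subseteq\cg T$, use condition (3) to show every $X\in\mathcal{B}_{\zeta}$ is an $R/I$-module so that $\Hom_{R}(X,-)$ lands in the $R/I$-trace of an injective cogenerator, produce the auxiliary sequence via Lemma \ref{lemma3-3}, and conclude with Proposition \ref{prop2-2} using $\mathcal{B}_{\zeta}\subseteq{}^{\perp}T$. The only cosmetic difference is that you phrase condition (3) as forcing $XI=0$ rather than $\Tr_{R/I}(X)=X$, which amounts to the same thing.
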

\begin{proof} By the definition of cosilting modules and Proposition \ref{prop3-1}, it is enough to show that $\mathcal{B}_{\zeta}\subseteq \cg T$. Assume that $I\in\Ker\Hom_{R}(-,\mathcal{B}_{\zeta})$ and $W$ is an injective cogenerator of $\Mod R$. For any $X\in\mathcal{B}_{\zeta}$, applying $\Hom_{R}(-,X)$ to the short exact sequence
$$0\rightarrow I\rightarrow R\rightarrow R/I\rightarrow 0,$$
we have the isomorphism $\Hom_{R}(R/I, X)\cong\Hom_{R}(R, X)$. Hence, $\Tr_{R/I}(X)=X$ and so, $X$ is generated by $R/I$. Then there is an epimorphism $f:{R/I}^{(\Lambda)}\rightarrow X$. Hence, for any $x\in X$, we can write it as $\sum_{i}^{n}f\varepsilon_{\lambda_{i}}(\overline{r_{i}})$ where $\varepsilon_{\lambda_{i}}:(R/I)_{\lambda_{i}}\rightarrow {R/I}^{(\Lambda)}$ is the $\lambda_{i}$-th  canonical embedding with $\lambda_{i}\in\Lambda$ and $\overline{r_{i}}\in R/I$. Hence, for any $g\in\Hom_{R}(X,W)$, $g(x)=\sum_{i}^{n}gf\varepsilon_{\lambda_{i}}(\overline{r_{i}})\in\Tr_{R/I}(W)$. Thus, $\Hom_{R}(X,W)= \Hom_{R}(X,\Tr_{R/I}(W))$. By Proposition \ref{prop3-1} and Lemma \ref{lemma3-3}, there is a short exact sequence
$$0\rightarrow T_{1}\rightarrow T_{0}\rightarrow \Tr_{R/I}(W)\rightarrow 0,$$
where $T_{i}\in \Prod T$. Applying $\Hom_{R}(X,-)$ to this sequence, we have the short exact sequence
$$0\rightarrow \Hom_{R}(X,T_{1})\rightarrow \Hom_{R}(X,T_{0})\rightarrow \Hom_{R}(X,\Tr_{R/I}(W))\rightarrow \Ext^{1}_{R}(X,T_{1}).$$
Since $\mathcal{B}_{\zeta}\subseteq {^{\perp}T}$, $\Ext^{1}_{R}(X,T_{1})=0$. Then, we have the short exact sequence
$$0\rightarrow \Hom_{R}(X,T_{1})\rightarrow \Hom_{R}(X,T_{0})\rightarrow \Hom_{R}(X,W)\rightarrow 0.$$
By Proposition \ref{prop2-2}, we have that $\mathcal{B}_{\zeta}\subseteq \cg T$.
\end{proof}
\begin{lemma}\label{lemma3-1}
Let $I$ be a right ideal of a ring $R$ and $\zeta:Q_{0}\rightarrow Q_{1}$ be an $R$-homomorphism with $T=\Ker \zeta$. If $R/I\in\mathcal{B}_{\zeta}$ and {\rm$\Hom_{R}(I,T)=0$}, then  there exists a short exact sequence in $\Mod R$
$$0\longrightarrow  T\longrightarrow  \Tr_{R/I}(Q_{0})\xrightarrow{\Tr_{R/I}(\zeta)}\Tr_{R/I}(Q_{1})\longrightarrow 0.$$
\end{lemma}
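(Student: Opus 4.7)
The plan is to verify exactness of the displayed sequence position-by-position, checking (i) that $T$ embeds into $\Tr_{R/I}(Q_{0})$, (ii) that the kernel of $\Tr_{R/I}(\zeta)$ equals $T$, and (iii) that $\Tr_{R/I}(\zeta)$ is surjective onto $\Tr_{R/I}(Q_{1})$. The first two parts rely on the hypothesis $\Hom_{R}(I,T)=0$, while the third uses $R/I\in\mathcal{B}_{\zeta}$.

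For (i), I would apply $\Hom_{R}(-,T)$ to the canonical short exact sequence $0\to I\to R\to R/I\to 0$. Since $\Hom_{R}(I,T)=0$, the resulting left-exact sequence yields an isomorphism $\Hom_{R}(R/I,T)\cong\Hom_{R}(R,T)\cong T$ given by evaluation at the class of $1$. Thus every element of $T$ is of the form $f(\overline{1})$ for some $R$-homomorphism $f:R/I\to T$, so $\Tr_{R/I}(T)=T$. Since $T\hookrightarrow Q_{0}$, the functoriality of the trace then gives $T=\Tr_{R/I}(T)\subseteq\Tr_{R/I}(Q_{0})$. Part (ii) follows immediately, since $\Tr_{R/I}(\zeta)$ is by construction the restriction of $\zeta$ to $\Tr_{R/I}(Q_{0})$, so its kernel is $\Tr_{R/I}(Q_{0})\cap\Ker\zeta=\Tr_{R/I}(Q_{0})\cap T=T$.

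For (iii), I would take a typical element $y=\sum_{i=1}^{n}g_{i}(\overline{r_{i}})\in\Tr_{R/I}(Q_{1})$ with $g_{i}\in\Hom_{R}(R/I,Q_{1})$ and $\overline{r_{i}}\in R/I$. Since $R/I\in\mathcal{B}_{\zeta}$, the map $\Hom_{R}(R/I,\zeta)$ is epic, so each $g_{i}$ lifts to some $f_{i}\in\Hom_{R}(R/I,Q_{0})$ with $\zeta\circ f_{i}=g_{i}$. Then $x:=\sum_{i=1}^{n}f_{i}(\overline{r_{i}})\in\Tr_{R/I}(Q_{0})$ satisfies $\Tr_{R/I}(\zeta)(x)=\zeta(x)=y$, completing surjectivity. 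I expect the only mildly nontrivial point of this argument to be step (i): the combination of $\Hom_{R}(I,T)=0$ with the canonical presentation of $R/I$ is precisely what is needed to force $T$ itself to be $R/I$-generated, and once this inclusion is secured the remaining verifications are purely formal consequences of the definitions of trace and $\mathcal{B}_{\zeta}$.
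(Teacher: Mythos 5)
Your proof is correct and follows essentially the same approach as the paper: in both, the hypothesis $\Hom_{R}(I,T)=0$ is applied to $0\to I\to R\to R/I\to 0$ to obtain $\Hom_{R}(R/I,T)\cong T$ and hence $T=\Tr_{R/I}(T)$, while $R/I\in\mathcal{B}_{\zeta}$ supplies the lifts $f_{i}$ needed for surjectivity of $\Tr_{R/I}(\zeta)$. Your step (ii), reading off the kernel as $\Tr_{R/I}(Q_{0})\cap\Ker\zeta=T$ once $T\subseteq\Tr_{R/I}(Q_{0})$ is secured, is a mild streamlining of the paper's element-level chase but rests on the same identification.
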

\begin{proof}
Since $R/I\in\mathcal{B}_{\zeta}$, then we have a short exact sequence
\begin{equation*}
  0\longrightarrow  \Hom_{R}(R/I,T)\xrightarrow{ \iota^{\ast}} \Hom_{R}(R/I,Q_{0})\xrightarrow{\zeta^{\ast}}\Hom_{R}(R/I,Q_{1})\longrightarrow  0
\end{equation*}
where $\iota:T\rightarrow Q_{0}$ is an inclusion map. Then we claim that we have the following exact sequence
\begin{equation*}
  0\longrightarrow \Tr_{R/I}(T)\xrightarrow{\Tr_{R/I}(\iota)}\Tr_{R/I}(Q_{0})\xrightarrow{\Tr_{R/I}(\zeta)}\Tr_{R/I}(Q_{1})
\longrightarrow 0
\end{equation*}

First, we prove that $\Tr_{R/I}(\zeta):\Tr_{R/I}(Q_{0})\rightarrow\Tr_{R/I}(Q_{1})$ is surjective. We set $z\in \Tr_{R/I}(Q_{1})$. Then, $z$ can be written as $\sum_{i=1}^{k}h_{i}(\overline{r_{i}})$,  where $h_{i}\in\Hom_{R}(R/I,Q_{1})$ and $\overline{r_{i}}\in R/I$. Note that $\zeta^{\ast}$ is surjective. That is, the induced map $\Hom_{R}(R/I,\zeta):\Hom_{R}(R/I,Q_{0})\rightarrow\Hom_{R}(R/I,Q_{1})$ is surjective. Then there exists a collection $\{g_{i}:R/I\rightarrow Q_{0}\}$ of $R$-homomorphisms such that $h_{i}=\zeta g_{i}$ for $1\leq i\leq k$. Hence, $z=\sum_{i=1}^{k}h_{i}(\overline{r_{i}})=\zeta(\sum_{i=1}^{k} g_{i}(\overline{r_{i}}))=\Tr_{R/I}(\zeta)(\sum_{i=1}^{k} g_{i}(\overline{r_{i}}))$, where $\sum_{i=1}^{k} g_{i}(\overline{r_{i}})\in\Tr_{R/I}(Q_{0})$. Hence, $\Tr_{R/I}(\zeta):\Tr_{R/I}(Q_{0})\rightarrow\Tr_{R/I}(Q_{1})$ is surjective.\par
Second, we prove that $\Tr_{R/I}(\iota):\Tr_{R/I}(T)\rightarrow\Tr_{R/I}(Q_{0})$ is injective. For any $x\in \Ker\Tr_{R/I}(\iota)$, we write  $x=\sum_{i=1}^{k}f_{i}(\overline{r_{i}})$. Then $\Tr_{R/I}(\iota)(x)= \iota(\sum_{i=1}^{k}f_{i}(\overline{r_{i}}))=0$. Since $\iota$ is injective, $x=\sum_{i=1}^{k}f_{i}(\overline{r_{i}})=0$. Hence, $\Tr_{R/I}(\iota):\Tr_{R/I}(T)\rightarrow\Tr_{R/I}(Q_{0})$ is injective.\par
Third, we prove that $\Ker\Tr_{R/I}(\zeta)=\Im\Tr_{R/I}(\iota)$. For any $x=\sum_{i=1}^{k}f_{i}(\overline{r_{i}})\in\Tr_{R/I}(T)$, we have that $\Tr_{R/I}(\zeta)\Tr_{R/I}(\iota)(\sum_{i=1}^{k}f_{i}(\overline{r_{i}}))=\sum_{i=1}^{k}\zeta\iota f_{i}(\overline{r_{i}})=0$ since $\zeta\iota=0$. Thus, $\Im\Tr_{R/I}(\iota)\subseteq\Ker\Tr_{R/I}(\zeta)$. On the other hand, for any $y=\sum_{i=1}^{k}g_{i}(\overline{r_{i}})\in\Ker\Tr_{R/I}(\zeta)$, $\Tr_{R/I}(\zeta)(\sum_{i=1}^{k}g_{i}(\overline{r_{i}}))=\zeta(\sum_{i=1}^{k}g_{i}(\overline{r_{i}}))=0$. Then, $y\in\Ker \zeta$. Note that $\Ker \zeta=\Im \iota$. Thus, there exists $x\in T$ such that $\iota(x)=y$. By the assumption, we know that $\Hom_{R}(\pi, T):\Hom_{R}(R/I, T)\rightarrow \Hom_{R}(R, T)$ is an isomorphism, where $\pi:R\rightarrow R/I$ is a projection. Then, $T$ is generated by $R/I$ and for $x\in T$, there exists an $R$-homomorphism $h:R/I\rightarrow T$ such that $\Hom_{R}(\pi, T)(h)(1)=h\pi(1)=x$ where $1$ is the unit of $R$. Thus, $y=\iota(h\pi(1))\in \Im\Tr_{R/I}(\iota)$ and $\Ker\Tr_{R/I}(\zeta)\subseteq\Im\Tr_{R/I}(\iota)$.\par
We get the desired sequence from the fact $\Tr_{R/I}(T)=T$ which has been proved above.
\end{proof}
\begin{lemma}\label{lemma3-6}Let $R$ be a ring and $M$ be an $R$-module. If $X$ and $\{Y_{\lambda}\}_{\lambda\in\Lambda}$ are $R$-modules in $\sigma[M]$, then there is a monomorphism {\rm$\Hom_{\sigma[M]}(X,\prod_{\lambda\in\Lambda}^{M}Y_{\lambda})\rightarrow \prod_{\lambda\in\Lambda}\Hom_{\sigma[M]}(X,Y_{\lambda})$}.
\end{lemma}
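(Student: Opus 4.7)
The plan is to construct the required monomorphism directly from the universal property of the product in the Grothendieck category $\sigma[M]$, and to verify injectivity by the uniqueness clause of that universal property. Because $\sigma[M]$ has products (see \cite[15.1(6)]{Wisbauer}), the object $\prod_{\lambda\in\Lambda}^{M}Y_{\lambda}$ comes equipped with canonical projections $p_{\mu}\colon\prod_{\lambda\in\Lambda}^{M}Y_{\lambda}\to Y_{\mu}$ for every $\mu\in\Lambda$, each of which lies in $\sigma[M]$.

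First, I would define the comparison map
$$\phi\colon \Hom_{\sigma[M]}\bigl(X,\prod_{\lambda\in\Lambda}^{M}Y_{\lambda}\bigr)\longrightarrow \prod_{\lambda\in\Lambda}\Hom_{\sigma[M]}(X,Y_{\lambda}), \qquad \phi(f)=(p_{\lambda}f)_{\lambda\in\Lambda}.$$
This is visibly a homomorphism of abelian groups, so the only substantial content is the assertion that it is injective.

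For injectivity, I would argue as follows. Suppose $\phi(f)=0$, so that $p_{\lambda}f=0$ for every $\lambda\in\Lambda$. The zero morphism $0\colon X\to\prod_{\lambda\in\Lambda}^{M}Y_{\lambda}$ also satisfies $p_{\lambda}\circ 0=0$ for every $\lambda$. Thus both $f$ and $0$ are morphisms in $\sigma[M]$ whose composition with each projection recovers the zero family $(0\colon X\to Y_{\lambda})_{\lambda\in\Lambda}$. The uniqueness part of the universal property of the product in $\sigma[M]$ then forces $f=0$, which shows that $\phi$ is a monomorphism.

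I do not anticipate any genuine obstacle in this proof; the statement is essentially nothing more than the fact that a representable functor preserves products. In fact, the same universal property shows that $\phi$ is an isomorphism, with inverse assembling any family $(f_{\lambda})_{\lambda}$ into its unique lift through $\prod_{\lambda\in\Lambda}^{M}Y_{\lambda}$, although only the monomorphism is needed in the sequel.
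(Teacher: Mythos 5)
Your proof is correct but takes a genuinely different and more direct route than the paper's. The paper proceeds by embedding everything into $\Mod R$: it uses the explicit construction of the product in $\sigma[M]$ as a submodule of the product in $\Mod R$ (namely the trace of a generator of $\sigma[M]$ in $\prod_{\lambda}Y_{\lambda}$), writes down the standard isomorphism $\Phi\colon\Hom_{R}(X,\prod_{\lambda}Y_{\lambda})\to\prod_{\lambda}\Hom_{R}(X,Y_{\lambda})$, and then fits $\phi$ into a commutative square with the inclusions so that injectivity of $\Phi$ forces injectivity of $\phi$. You instead appeal directly to the universal property of the product in the category $\sigma[M]$, which is the model-independent argument that $\Hom_{\sigma[M]}(X,-)$ preserves products. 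Your approach is shorter, does not require knowing how the product in $\sigma[M]$ is constructed, and in fact establishes that $\phi$ is an isomorphism rather than merely a monomorphism (as you note); the paper's diagram-chase only certifies injectivity. The one thing the paper's method buys is that it makes explicit which comparison map is involved and how it relates to the ambient category $\Mod R$, which is a perspective the author reuses elsewhere (for instance in Lemma~\ref{lemma3-7}), but for the statement as recorded your proof is perfectly adequate and cleaner.
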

\begin{proof} By the construction of products in  $\sigma[M]$ (see \cite[15.1(6)]{Wisbauer}), we know that $\prod_{\lambda\in\Lambda}^{M}Y_{\lambda}$ together with the restrictions $\overline{\pi}_{\lambda}$ of the canonical projections $\pi_{\lambda}:\prod_{\lambda\in\Lambda}Y_{\lambda}\rightarrow Y_{\lambda} $ is the product in $\sigma[M]$. Then we can define two group homomorphisms $\phi:\Hom_{\sigma[M]}(X,\prod_{\lambda\in\Lambda}^{M}Y_{\lambda})\rightarrow \prod_{\lambda\in\Lambda}\Hom_{\sigma[M]}(X,Y_{\lambda})$ and $\Phi:\Hom_{R}(X,\prod_{\lambda\in\Lambda}Y_{\lambda})\rightarrow \prod_{\lambda\in\Lambda}\Hom_{R}(X,Y_{\lambda})$, which are given by $\phi(f)=\{f\overline{\pi}_{\lambda}\}_{\lambda\in\Lambda}$ and $\Phi(g)=\{g\pi_{\lambda}\}_{\lambda\in\Lambda}$ for any $f\in\Hom_{\sigma[M]}(X,\prod_{\lambda\in\Lambda}^{M}Y_{\lambda})$, $g\in\Hom_{R}(X,\prod_{\lambda\in\Lambda}Y_{\lambda})$. It is well-known that $\Phi$ is an isomorphism. Consider the following commutative diagram
 $$\xymatrix{0\ar[r]&\Hom_{\sigma[M]}(X,\prod_{\lambda\in\Lambda}^{M}Y_{\lambda})\ar[r]^{inc}\ar[d]^{\phi}  & \Hom_{R}(X,\prod_{\lambda\in\Lambda}Y_{\lambda})\ar[d]^{\Phi}\\
   0\ar[r]&\prod_{\lambda\in\Lambda}\Hom_{\sigma[M]}(X,Y_{\lambda})\ar[r]^{id}  & \prod_{\lambda\in\Lambda}\Hom_{R}(X,Y_{\lambda})       }$$
   where $inc$ is a inclusion map and $id$ is an identity. It implies that $\phi$ is injective.
\end{proof}
\begin{lemma}\label{lemma3-7}Let $R$ be a ring and $M$ be an $R$-module. If $T$ is an $R$-module such that $T^{\Lambda}\in\sigma[M]$ for any set $\Lambda$, then there is a monomorphism $\Ext^{1}_{\sigma[M]}(X,T^{\Lambda})\rightarrow \Ext^{1}_{\sigma[M]}(X,T)^{\Lambda}$ for any $X\in\sigma[M]$.
\end{lemma}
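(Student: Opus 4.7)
My approach is to construct a ``pushout along the projections'' map
\[
\phi\colon \Ext^{1}_{\sigma[M]}(X,T^{\Lambda})\longrightarrow \Ext^{1}_{\sigma[M]}(X,T)^{\Lambda}
\]
and show that it is injective. By Lemma \ref{lemma3-4}, the hypothesis $T^{\Lambda}\in\sigma[M]$ identifies $T^{\Lambda}$ with the product $\prod^{M}_{\mu\in\Lambda}T$ in $\sigma[M]$; consequently, the canonical projections $\pi_{\mu}\colon T^{\Lambda}\to T$ are morphisms of $\sigma[M]$, and $T^{\Lambda}$ satisfies the universal property of a product \emph{inside} $\sigma[M]$. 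Given a class $[\xi]\in\Ext^{1}_{\sigma[M]}(X,T^{\Lambda})$ represented by an extension $\xi\colon 0\to T^{\Lambda}\xrightarrow{\iota}E\to X\to 0$ in $\sigma[M]$, I push out $\iota$ along each $\pi_{\mu}$ to obtain an extension $\xi_{\mu}\colon 0\to T\xrightarrow{\sigma_{\mu}}E_{\mu}\to X\to 0$ together with a map $\tau_{\mu}\colon E\to E_{\mu}$ satisfying $\sigma_{\mu}\pi_{\mu}=\tau_{\mu}\iota$. Since $\sigma[M]$ is closed under direct sums and quotients, this pushout agrees with the pushout in $\Mod R$, so there is no ambiguity; setting $\phi([\xi])=([\xi_{\mu}])_{\mu\in\Lambda}$ then yields a well-defined homomorphism by the standard functoriality of the Yoneda pushforward.

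For injectivity, assume $\phi([\xi])=0$, so every $\xi_{\mu}$ splits; I choose a retraction $q_{\mu}\colon E_{\mu}\to T$ in $\sigma[M]$ and set $h_{\mu}:=q_{\mu}\tau_{\mu}\colon E\to T$. Commutativity of the pushout square gives
\[
h_{\mu}\iota=q_{\mu}\sigma_{\mu}\pi_{\mu}=\pi_{\mu}.
\]
By the universal property of $T^{\Lambda}$ as a product in $\sigma[M]$, the family $(h_{\mu})_{\mu\in\Lambda}$ assembles into a unique morphism $h\colon E\to T^{\Lambda}$ with $\pi_{\mu}h=h_{\mu}$ for every $\mu$. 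Then $\pi_{\mu}(h\iota)=h_{\mu}\iota=\pi_{\mu}=\pi_{\mu}\id_{T^{\Lambda}}$, and the uniqueness clause in the universal property forces $h\iota=\id_{T^{\Lambda}}$. Hence $\iota$ has a retraction, $\xi$ splits, and $[\xi]=0$, proving that $\phi$ is a monomorphism.

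The main delicate point is the identification of the product in $\sigma[M]$ with the ambient $\Mod R$-product $T^{\Lambda}$: without it, the pushouts above need not remain in $\sigma[M]$, and more crucially, the individual retractions $(h_{\mu})$ could not be assembled into a single morphism in $\sigma[M]$. Lemma \ref{lemma3-4} supplies exactly this identification under the standing hypothesis $T^{\Lambda}\in\sigma[M]$, after which the rest reduces to routine Yoneda Ext manipulation. One could alternatively derive the result by applying $\prod^{M}_{\mu\in\Lambda}(-)$ to an injective coresolution $0\to T\to I_{0}\to I_{1}$ of $T$ in $\sigma[M]$ (using that $\sigma[M]$-products of injectives are injective and that Lemma \ref{lemma3-6} gives a monomorphism $\Hom_{\sigma[M]}(X,\prod^{M}I_{i})\to \prod\Hom_{\sigma[M]}(X,I_{i})$), but the Yoneda route above is shorter and avoids having to argue about $I_{i}^{\Lambda}$ lying in $\sigma[M]$.
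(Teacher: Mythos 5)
Your proof is correct, and the map $\phi$ you construct is exactly the map $\Psi$ the paper uses, but your argument for its injectivity is genuinely different. The paper leans on the ambient category: it observes that the analogous ``pushout along projections'' map $\Phi\colon\Ext^{1}_{R}(X,T^{\Lambda})\to\Ext^{1}_{R}(X,T)^{\Lambda}$ is the standard isomorphism in $\Mod R$, that $\Phi$ restricts to $\Psi$ because $\sigma[M]$ is closed under the relevant pushouts (citing Wisbauer 15.1(5)), and that each $\Ext^{1}_{\sigma[M]}(-,-)\to\Ext^{1}_{R}(-,-)$ is a monomorphism since $\sigma[M]$ is full; it then concludes by the obvious diagram chase in a commutative square with injective horizontals and an injective right-hand vertical. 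You instead prove injectivity intrinsically: if every pushout $\xi_{\mu}$ splits, you assemble the retractions $q_{\mu}\tau_{\mu}$ into a map $h\colon E\to T^{\Lambda}$ via the universal property of the product in $\sigma[M]$ (which is precisely where Lemma \ref{lemma3-4} is needed, to identify $\prod^{M}T$ with $T^{\Lambda}$ and its projections with the $\pi_{\mu}$) and check that $h\iota=\id$. Your route never invokes the $\Mod R$-isomorphism nor the comparison monomorphism on $\Ext^{1}$, so it is slightly more self-contained and would work verbatim in any abelian category where the product of copies of $T$ exists and receives the extension class; the paper's route is shorter because it offloads the splitting verification to the known $\Mod R$ statement. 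One small inaccuracy in your discussion: you say that without Lemma \ref{lemma3-4} ``the pushouts above need not remain in $\sigma[M]$,'' but this is not so---pushouts of morphisms in $\sigma[M]$ always land in $\sigma[M]$ since it is closed under quotients and finite coproducts. The genuine role of Lemma \ref{lemma3-4}, as you correctly identify in the next clause, is to make $T^{\Lambda}$ the $\sigma[M]$-product so that the family $(h_{\mu})$ can be assembled and the uniqueness clause forces $h\iota=\id_{T^{\Lambda}}$.
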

\begin{proof}
 Assume that $[\xi]\in\Ext^{1}_{R}(X,T^{\Lambda})$ where $\xi:0\rightarrow T^{\Lambda}\rightarrow N\rightarrow X\rightarrow0$. Let $\lambda\in\Lambda$ and $\pi_{\lambda}:T^{\Lambda}\rightarrow T_{\lambda}$ be the $\lambda$-th projection. Consider the push-out
  $$\xymatrix{\xi:& 0\ar[r]&T^{\Lambda}\ar[d]_{\pi_{\lambda}}\ar[r]&N \ar[d]\ar[r]&X\ar@{=}[d]\ar[r]&0\\
   (\pi_{\lambda})_{\sharp}(\xi):&0\ar[r]&T_{\lambda}\ar[r]&U \ar[r]&X\ar[r]&0  }$$
  $(\pi_{\lambda})_{\sharp}(\xi)$ denotes by the bottom row exact sequence of the above diagram.
  It is  well-known that the group homomorphism $\Phi:\Ext^{1}_{R}(X,T^{\Lambda})\rightarrow \Ext^{1}_{R}(X,T)^{\Lambda}$ given by $\Phi([\xi])=\{[(\pi_{\lambda})_{\sharp}(\xi)]\}_{\lambda\in\Lambda}$, where $[\xi]\in \Ext^{1}_{R}(X,T^{\Lambda})$, is an isomorphism. If $[\xi]\in\Ext^{1}_{\sigma[M]}(X,T^{\Lambda})$, then $(\pi_{\lambda})_{\sharp}(\xi)\in\Ext^{1}_{\sigma[M]}(X,T)$ by \cite[15.1(5)]{Wisbauer}.  Hence, we get the restriction of $\Phi$ on $\Ext^{1}_{\sigma[M]}(X,T^{\Lambda})$, denoted by $\Psi:\Ext^{1}_{\sigma[M]}(X,T^{\Lambda})\rightarrow \Ext^{1}_{\sigma[M]}(X,T)^{\Lambda}$. Then there is a commutative diagram
  $$\xymatrix{0\ar[r]&\Ext^{1}_{\sigma[M]}(X,T^{\Lambda})\ar[r]\ar[d]^{\Psi}  & \Ext^{1}_{R}(X,T^{\Lambda})\ar[d]^{\Phi}\\
   0\ar[r]&\Ext^{1}_{\sigma[M]}(X,T)^{\Lambda}\ar[r]  & \Ext^{1}_{R}(X,T)^{\Lambda}       }$$
   where all rows are exact. Therefore, we know that $\Psi$  is injective.
\end{proof}
\begin{theorem}\label{Th1}
Let $R$ be a ring and $\zeta:Q_{0}\rightarrow Q_{1}$ is an $R$-homomorphism between injective modules with $T=\Ker \zeta$. Then $T$ is a cosliting module in $\Mod R$ if and only if there exists a right ideal $I$ of  $R$ such that  $T$ and $I$ satisfy the following conditions:
  \begin{enumerate}
    \item $T$ is a  cotilting object in $\sigma[R/I]$ with the injective coresolving
\begin{equation*}
0\longrightarrow  T\longrightarrow   \Tr_{R/I}(Q_{0})\xrightarrow{\Tr_{R/I}(\zeta)}\Tr_{R/I}(Q_{1})\longrightarrow  0.
\end{equation*}
    \item $\Ext^{1}_{\sigma[R/I]}(R/I,T)=0$.
    \smallskip

    \item  {\rm$I\in\Ker\Hom_{R}(-,\mathcal{B}_{\zeta})$}.
  \end{enumerate}
\end{theorem}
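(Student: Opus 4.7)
The $(\Leftarrow)$ direction is exactly Proposition~\ref{prop3-4}, so the plan is to establish $(\Rightarrow)$. Assume $T$ is cosilting in $\Mod R$, so by Proposition~\ref{lemma2-1} the pair $(\Ker\Hom_R(-,T),\cg T)$ is a torsion pair with $\cg T=\mathcal{B}_\zeta$. I take $I=t(R)$, the torsion radical of $R$ in this pair. A preliminary step is to check that $I$ is two-sided: for any $r\in R$, left multiplication $\lambda_r\colon R\to R$, $s\mapsto rs$, is right $R$-linear, so $r\cdot t(R)=\lambda_r(t(R))$ is a quotient of $t(R)$; since the torsion class of a torsion pair is closed under quotients, $r\cdot t(R)$ lies in $\Ker\Hom_R(-,T)$, and maximality of $t(R)$ forces $r\cdot t(R)\subseteq t(R)$. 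Two-sidedness identifies $\sigma[R/I]$ with the full subcategory $\{M\in\Mod R:MI=0\}$, and $\Tr_{R/I}(Q_i)$ with the $I$-annihilator submodule $\{q\in Q_i:qI=0\}$.

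With this setup, conditions (2), (3) and the injective coresolution in (1) follow quickly. Every $X\in\mathcal{B}_\zeta=\cg T$ embeds into some $T^{\mu}$, so $\Hom_R(I,X)\hookrightarrow\Hom_R(I,T)^{\mu}=0$; this proves (3), and specializing yields $\Hom_R(I,T)=0$. Via the $R$-map $R\to T$, $r\mapsto tr$, this forces $TI=0$, and the same argument gives $XI=0$ for every $X\in\cg T$, so $\cg T\subseteq\sigma[R/I]$. For (2), $R/I$ is torsion-free, so $R/I\in\cg T=\mathcal{B}_\zeta\subseteq{^{\perp}T}$ and $\Ext^1_R(R/I,T)=0$; together with the injection $\Ext^1_{\sigma[R/I]}(R/I,T)\hookrightarrow\Ext^1_R(R/I,T)$ this gives condition (2). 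The injective coresolution in (1) is then supplied by Lemma~\ref{lemma3-1} from the already-verified hypotheses $R/I\in\mathcal{B}_\zeta$ and $\Hom_R(I,T)=0$, and its terms $\Tr_{R/I}(Q_i)$ are injective in $\sigma[R/I]$ by the preliminaries.

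The main obstacle is the cotilting equality $\cg_{\sigma[R/I]} T=\Ker\Ext^1_{\sigma[R/I]}(-,T)$ in condition (1). Since $\cg T$ is closed under products in $\Mod R$ and lies in $\sigma[R/I]$, Lemma~\ref{lemma3-4} ensures that products of $T$ in $\sigma[R/I]$ coincide with products in $\Mod R$, so $\cg_{\sigma[R/I]} T=\cg T$. Applying $\Hom_{\sigma[R/I]}(X,-)$ to the injective coresolution and using injectivity of $\Tr_{R/I}(Q_0)$ in $\sigma[R/I]$ yields
\[
\Ext^1_{\sigma[R/I]}(X,T)\cong\mathrm{coker}\bigl(\Hom_R(X,\Tr_{R/I}(Q_0))\to\Hom_R(X,\Tr_{R/I}(Q_1))\bigr).
\]
The key observation is that for $X\in\sigma[R/I]$, every $R$-linear map $f\colon X\to Q_i$ satisfies $f(X)\cdot I=f(XI)=0$, so $f(X)\subseteq\Tr_{R/I}(Q_i)$ and therefore $\Hom_R(X,Q_i)=\Hom_R(X,\Tr_{R/I}(Q_i))$. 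Consequently $X\in\Ker\Ext^1_{\sigma[R/I]}(-,T)$ iff $\Hom_R(X,\zeta)$ is surjective iff $X\in\mathcal{B}_\zeta=\cg T$, so $\Ker\Ext^1_{\sigma[R/I]}(-,T)=\cg T=\cg_{\sigma[R/I]} T$, completing the plan.
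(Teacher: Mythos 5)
Your argument is correct, and for the heart of the forward implication — the cotilting equality $\cg_{\sigma[R/I]}T=\Ker\Ext^1_{\sigma[R/I]}(-,T)$ — you take a genuinely different and shorter route than the paper. Both proofs pick the same $I$ (the torsion radical of $R$ for $(\Ker\Hom_R(-,T),\cg T)$) and dispatch (2), (3) and the coresolution in (1) the same way via Lemma~\ref{lemma3-1}. But the paper then proceeds indirectly: it imports from \cite[Theorem 3.7]{Breaz} a short exact sequence $0\to T_1\to T_0\to\Tr_{R/I}(E)\to 0$ with $T_i\in\Prod T$, and develops Lemmas~\ref{lemma3-6} and~\ref{lemma3-7} (monomorphisms comparing $\Hom$ and $\Ext^1$ in $\sigma[R/I]$ with products) in order to first establish that $\Ker\Hom_{\sigma[R/I]}(-,T)\cap\Ker\Ext^1_{\sigma[R/I]}(-,T)=0$, and then argues via the reject $\rej_T(X)$. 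You instead make one structural observation the paper never states — that the torsion radical $I$ is automatically a two-sided ideal, so $\sigma[R/I]=\Mod(R/I)=\{M:MI=0\}$ and $\Tr_{R/I}(Q_i)$ is the $I$-annihilator submodule — which gives you $\Hom_R(X,Q_i)=\Hom_R(X,\Tr_{R/I}(Q_i))$ for every $X\in\sigma[R/I]$. Combined with injectivity of $\Tr_{R/I}(Q_0)$ in $\sigma[R/I]$, this lets you compute $\Ext^1_{\sigma[R/I]}(X,T)$ directly from the coresolution and identify $\Ker\Ext^1_{\sigma[R/I]}(-,T)$ with $\mathcal{B}_\zeta\cap\sigma[R/I]=\cg T=\cg_{\sigma[R/I]}T$. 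This makes Lemmas~\ref{lemma3-6} and~\ref{lemma3-7} and the appeal to the Breaz coresolution unnecessary, at the cost of using the classical fact that $\sigma[R/I]$ coincides with a genuine full module category once $I$ is two-sided. One small stylistic wrinkle: you say condition (3) ``specializes'' to $\Hom_R(I,T)=0$, but logically $\Hom_R(I,T)=0$ is the starting point (it is immediate from $I$ being the torsion part for $(\Ker\Hom_R(-,T),\cg T)$), and (3) is the consequence via embedding any $X\in\mathcal{B}_\zeta=\cg T$ into a power of $T$; worth rephrasing, but the mathematics is sound.
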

\begin{proof}
By Proposition \ref{prop3-4}, it is enough to show that the necessity. Assume that $T$ is a cosilting module in $\Mod R$. By Proposition \ref{lemma2-1}, $T$ induces a torsion pair ($\Ker\Hom_{R}(-,T)$, $\cg T$) in $\Mod R$. Let $I$ be the torsion part of $R$ with respect to the torsion pair ($\Ker\Hom_{R}(-,T)$, $\cg T$). We check that this $I$ is just the desired right ideal of $R$. It is easy to see that $\Hom_{R}(I,\mathcal{B}_{\zeta})=0$. Then for any $X\in\cg T$, we have an isomorphism $\Hom_{R}(R/I,X)\cong \Hom_{R}(R,X)$. Thus, $X$ is generated by $R/I$ and so $\cg T\subseteq \sigma[R/I]$. Note that  $R/I\in \cg T=\mathcal{B}_{\zeta}\subseteq {^{\perp}T}$. Since $\Ext^{1}_{\sigma[R/I]}(R/I,T)\subseteq \Ext^{1}_{R}(R/I,T)=0$, we have $\Ext^{1}_{\sigma[R/I]}(R/I,T)=0$.\par
Now, it is enough to check (1). Since $T\in\mathcal{B}_{\zeta}$, we know that $\Hom_{R}(I,T)=0$. By Lemma \ref{lemma3-1},  there exists a short exact sequence in $\sigma[R/I]$
$$0\longrightarrow  T\longrightarrow  \Tr_{R/I}(Q_{0})\xrightarrow{\Tr_{R/I}(\zeta)}\Tr_{R/I}(Q_{1})\longrightarrow 0.$$
Then the injective dimension of $T$ in $\sigma[R/I]$ is at most one.\par
 Next, we prove that $\Ext^{1}_{\sigma[R/I]}(T^{\lambda}, T)=0$. Note that $T\in\mathcal{B}_{\zeta}\subseteq {^{\perp}T}$ and $\mathcal{B}_{\zeta}$ is closed under product. Thus for any set $\lambda$, $T^{\lambda}\in{^{\perp}T}$. Moreover, since $\cg T\subseteq \sigma[R/I]$, $T^{\lambda}\in\sigma[R/I]$  and so $\Ext^{1}_{\sigma[R/I]}(T^{\lambda}, T)\subseteq \Ext^{1}_{R}(T^{\lambda}, T)=0$. It means that $\Ext^{1}_{\sigma[R/I]}(T^{\lambda}, T)=0$.\par
 From \cite[Theorem 3.7]{Breaz}, there exists a short exact sequence in $\sigma[R/I]$
 \begin{equation}\label{eq3-6}
 0\rightarrow T_{1}\rightarrow T_{0}\rightarrow\Tr_{R/I}(E) \rightarrow 0
\end{equation}
 where $T_{i}\in\Prod T$, $E$ is an injective cogenerator of $\Mod R$. By Lemma \ref{lemma3-4}, we know that $T_{i}\in\Prod_{\sigma[R/I]}T$, where $\Prod_{\sigma[R/I]}T$ consists of all direct summands of product of $T$ in $\sigma[R/I]$.\par

 We claim that $\Ker\Hom_{\sigma[R/I]}(-,T)\bigcap \Ker\Ext^{1}_{\sigma[R/I]}(-,T)=0$.\par For any $X\in\Ker\Hom_{\sigma[R/I]}(-,T)\bigcap \Ker\Ext^{1}_{\sigma[R/I]}(-,T)$, applying $\Hom_{\sigma[R/I]}(X,-)$ to the sequence (\ref{eq3-6}), we have the following exact sequence
 $$\Hom_{\sigma[R/I]}(X,T_{0})\rightarrow \Hom_{\sigma[R/I]}(X,\Tr_{R/I}(E))\rightarrow\Ext^{1}_{\sigma[R/I]}(X,T_{1}).$$
Since $\Hom_{\sigma[R/I]}(X,T^{\lambda})=\Hom_{R}(X,T)^{\lambda}=\Hom_{\sigma[R/I]}(X,T)^{\lambda}$, we have that $\Hom_{\sigma[R/I]}(X,T_{0})=0$. By Lemma \ref{lemma3-7} and $X\in\Ker\Ext^{1}_{\sigma[R/I]}(-,T)$, $\Ext^{1}_{\sigma[R/I]}(X,T_{1})=0$. Hence, we have that  $\Hom_{\sigma[R/I]}(X,\Tr_{R/I}(E))=0$. Since $\Tr_{R/I}(E)$ is an injective cogenerator, there is a monomorphism $X\rightarrow \prod_{\alpha\in J}^{R/I}\Tr_{R/I}(E)$. Then, by Lemma \ref{lemma3-6}, we have the monomorphism $g$ which is the composition of the following monomorphisms $$\Hom_{\sigma[R/I]}(X,X)\rightarrow \Hom_{\sigma[R/I]}(X,\Pi_{\alpha\in J}^{R/I}\Tr_{R/I}(E)_{\alpha})\rightarrow \Pi_{\alpha\in J}\Hom_{\sigma[R/I]}(X,\Tr_{R/I}(E)_{\alpha}).$$
Then we obtain that $\End_{R}(X)=0$ and so, $X=0$.\par
Now, we check that $\cg_{\sigma[R/I]}T=\Ker\Ext^{1}_{\sigma[R/I]}(-,T)$. For any $X\in \cg_{\sigma[R/I]}T$, there is a monomorphism $f:X\rightarrow \prod_{\alpha\in J}^{R/I}T_{\alpha}$. By Lemma \ref{lemma3-4}, we have that $\prod_{\alpha\in J}T_{\alpha}=T^{J}$. Hence, $X\in \cg T=\mathcal{B}_{\zeta}\subseteq {^{\perp}T}$. It implies that $X\in\Ker\Ext^{1}_{\sigma[R/I]}(-,T)$ and hence, $\cg_{\sigma[R/I]}T\subseteq\Ker\Ext^{1}_{\sigma[R/I]}(-,T)$. On the other hand, for any $X\in\Ker\Ext^{1}_{\sigma[R/I]}(-,T)$, there is a short exact sequence in $\sigma[R/I]$
$$0\rightarrow \rej_{T}(X)\rightarrow X\rightarrow X/\rej_{T}(X)\rightarrow 0.$$
Applying $\Hom_{\sigma[R/I]}(-,T)$ to this sequence, we have a long exact sequence
\begin{align*}
    0\rightarrow&\Hom_{\sigma[R/I]}(X/\rej_{T}(X),T) \xlongrightarrow{\cong}\Hom_{\sigma[R/I]}(X,T) \rightarrow\Hom_{\sigma[R/I]}(\rej_{T}(X),T)\\
   \rightarrow&\Ext^{1}_{\sigma[R/I]}(X/\rej_{T}(X),T)\rightarrow\Ext^{1}_{\sigma[R/I]}(X,T)
 \rightarrow\Ext^{1}_{\sigma[R/I]}(\rej_{T}(X),T)\\
 \rightarrow& \Ext^{2}_{\sigma[R/I]}(X/\rej_{T}(X),T)\rightarrow\cdots.
  \end{align*}
By the formula in \cite[Exercises 8 (7)]{Anderson} and $\sigma[R/I]$ is a full subcategory, we have the isomorphism $\Hom_{\sigma[R/I]}(X/\rej_{T}(X),T) \cong\Hom_{\sigma[R/I]}(X,T)$. Since $X/\rej_{T}(X)$ is cogenerated by $T$, from Lemma \ref{lemma3-4}, $X/\rej_{T}(X)\in\cg_{\sigma[R/I]}T$. Then, $\Ext^{1}_{\sigma[R/I]}(X/\rej_{T}(X),T)=0$. It yields that $\Hom_{\sigma[R/I]}(\rej_{T}(X),T)=0$. Since $X\in\Ker\Ext^{1}_{\sigma[R/I]}(-,T)$ and the injective dimension of $T$ in $\sigma[R/I]$ is at most one, we know that $\Ext^{1}_{\sigma[R/I]}(X,T)=0=\Ext^{2}_{\sigma[R/I]}(X/\rej_{T}(X),T)$. Thus, $\Ext^{1}_{\sigma[R/I]}(\rej_{T}(X),T)=0$. This means $\rej_{T}(X)\in\Ker\Hom_{\sigma[R/I]}(-,T)\bigcap \Ker\Ext^{1}_{\sigma[R/I]}(-,T)$. Hence, $\rej_{T}(X)=0$ and so, $X\cong X/\rej_{T}(X)\in\cg_{\sigma[R/I]}T$. Therefore, $\Ker\Ext^{1}_{\sigma[R/I]}(-,T)\subseteq\cg_{\sigma[R/I]}T$. This completes the proof.
\end{proof}
\begin{corollary}
Let $R$ be a ring and $\zeta:Q_{0}\rightarrow Q_{1}$ is an $R$-homomorphism between injective modules with $T=\Ker \zeta$. Assume that $T$ is a cosliting module in $\Mod R$ and $I$ is the torsion part of $R$ with respect to the torsion pair {\rm($\Ker\Hom_{R}(-,T)$, $\cg T$)}. If one of following conditions hold, \begin{enumerate}
  \item $R$ is commutative
  \item $R/I$  is finitely generated as a module over $\End_{R}(R/I)$
\end{enumerate}
then $T$ is a  cotilting module in $\Mod R/J$, where $J=\ann_{A}(R/I)$.
\end{corollary}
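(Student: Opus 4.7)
The plan is to reduce both cases to a single observation: if $R/I$ is finitely generated as an $\End_R(R/I)$-module, then by the last sentence of Section~2 (from \cite{Wisbauer}) we have $\sigma[R/I]=\Mod(R/\ann_R(R/I)) = \Mod(R/J)$, so Theorem~\ref{Th1}(1) immediately says that $T$ is a cotilting object in $\Mod(R/J)$, which is the desired conclusion.

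First I would observe that since $T$ is cosilting in $\Mod R$, Theorem~\ref{Th1} applies directly to the torsion part $I$ of $R$ determined by the torsion pair $(\Ker\Hom_R(-,T),\cg T)$ (this is exactly how $I$ was constructed in the necessity direction of that theorem). Therefore $T$ is already a cotilting object in $\sigma[R/I]$. What remains is to identify $\sigma[R/I]$ with a module category over a factor ring of $R$ in each of the two listed cases.

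Case (2) is immediate: it is exactly the hypothesis needed to invoke the identity $\sigma[R/I] = \Mod(R/\ann_R(R/I))$ recalled from \cite{Wisbauer}. For case (1), the point is that when $R$ is commutative, right multiplication gives a ring map $R/I \to \End_R(R/I)$, so $\End_R(R/I)$ contains $R/I$ as a subring, and $R/I$ is cyclic over itself with generator $\bar 1$; hence $R/I$ is cyclic (in particular finitely generated) as an $\End_R(R/I)$-module, and we are back in the situation of case (2). Combining, $\sigma[R/I]=\Mod(R/J)$ with $J=\ann_R(R/I)$.

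Putting these together, $T$ becomes a cotilting object in $\Mod(R/J)$; since cotilting objects in a module category are precisely cotilting modules, $T$ is a cotilting $R/J$-module. I do not expect a serious obstacle here: the substantive content lies in Theorem~\ref{Th1}, and the corollary is essentially a translation step. The only mild care needed is to make sure the cotilting structure in $\sigma[R/I]$ (injective coresolution of length one, the equality $\cg_{\sigma[R/I]}T=\Ker\Ext^1_{\sigma[R/I]}(-,T)$) transports unchanged to $\Mod(R/J)$ under the identification $\sigma[R/I]=\Mod(R/J)$, but this is automatic because the identification is as abelian categories (indeed as full subcategories of $\Mod R$).
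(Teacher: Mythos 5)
Your proof is correct and follows essentially the same route as the paper: apply Theorem \ref{Th1} to see $T$ as a cotilting object in $\sigma[R/I]$, then invoke the identification $\sigma[R/I]=\Mod(R/J)$ from Wisbauer [15.4]. Your reduction of case (1) to case (2), via the observation that for commutative $R$ the ring map $R/I\to\End_R(R/I)$ makes $R/I$ cyclic over $\End_R(R/I)$, correctly spells out what the paper leaves implicit in its one-line citation.
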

\begin{proof} It is a direct consequence of Theorem \ref{Th1} and \cite[15.4]{Wisbauer}.
\end{proof}
\begin{example}\label{ex1} Let $A$ be the $k$-algebra over a filed $k$ given by the bound quiver
$$\xymatrix@!0{3\ar[r]^{\alpha}&2\ar[r]^{\beta}&1}$$
with relation $\alpha\beta=0$. The Auslander-Reiten quiver of $\mod A$ can be drawn as following.
$$\xymatrix@!0{&*+<0.5mm>{\text{$\begin{smallmatrix}2\\1\end{smallmatrix}$}}\ar[dr]^{b}&&*+<0.5mm>{\text{$\begin{smallmatrix}3\\2\end{smallmatrix}$}}\ar[dr]&\\
*+<0.5mm>{\text{$\begin{smallmatrix}1\end{smallmatrix}$}}\ar[ur]^{a}\ar@{..}[rr]&&*+<0.5mm>{\text{$\begin{smallmatrix}2\end{smallmatrix}$}}\ar[ur]^{c}\ar@{..}[rr]&&*+<0.5mm>{\text{$\begin{smallmatrix}3\end{smallmatrix}$}}}$$
Then $\gl~A=2$. We consider $T=\begin{smallmatrix}1\end{smallmatrix}\oplus \begin{smallmatrix}2\\1\end{smallmatrix}$. Then there is an injective coresolving
$$\xymatrix{
                \text{$\begin{smallmatrix}0\end{smallmatrix}$}\ar[r]& \text{$\begin{smallmatrix}1\end{smallmatrix}$}\oplus \text{$\begin{smallmatrix}2\\1\end{smallmatrix}$} \ar[r]^-{\text{$\begin{bmatrix}\begin{smallmatrix}a\\1\end{smallmatrix}\end{bmatrix}$}}& \text{$\begin{smallmatrix}2\\1\end{smallmatrix}$}\oplus \text{$\begin{smallmatrix}2\\1\end{smallmatrix}$}\ar[r]^-{\zeta=\text{$\begin{bmatrix}\begin{smallmatrix}cb\\0\end{smallmatrix}\end{bmatrix}$}}&      \text{$\begin{smallmatrix}3\\2\end{smallmatrix}$}    }$$
It is easy to check that $\mathcal{B}_{\zeta}=\cg T=\Add T$. Hence, $T$ is a cosilting $A$-module. Set  $I=\begin{smallmatrix}3\\2\end{smallmatrix}$. Then $A/I\cong T$ in $\Mod A$  and $\ann_{A}(A/I)=Ae_{3}A$. Clearly, $T$ is a cotilting $A/\ann_{A}(A/I)$-module with the injective coresolving
$$\xymatrix{
                \text{$\begin{smallmatrix}0\end{smallmatrix}$}\ar[r]& \text{$\begin{smallmatrix}1\end{smallmatrix}$}\oplus \text{$\begin{smallmatrix}2\\1\end{smallmatrix}$} \ar[r]& \text{$\begin{smallmatrix}2\\1\end{smallmatrix}$}\oplus \text{$\begin{smallmatrix}2\\1\end{smallmatrix}$}\ar[r]&      \text{$\begin{smallmatrix}2\end{smallmatrix}$}\ar[r]&\text{$\begin{smallmatrix}0\end{smallmatrix}$}   }.$$
                where $\Tr_{A/I}(\begin{smallmatrix}3\\2\end{smallmatrix})=\begin{smallmatrix}2\end{smallmatrix}$.
\end{example}

\end{document}